\theoremstyle{plain}
\newtheorem{theorem}{Theorem}[section]
\newtheorem{prop}[theorem]{Proposition}
\newtheorem{corollary}[theorem]{Corollary}
\newtheorem*{lem*}{Lemma}
\newtheorem*{thm*}{Theorem}
\theoremstyle{definition}
\newtheorem{definition}[theorem]{Definition}
\theoremstyle{remark}
\newcommand{\beq}{\begin{equation}}
\newcommand{\eeq}{\end{equation}}
\newcommand{\cG}{\mathcal{G}}
\newcommand{\G}{\mathcal{G}}
\newcommand{\Z}{\mathbb{Z}}
\newcommand{\N}{\mathbb{N}}
\newcommand{\Cs}{$C^*$-}
\renewcommand{\sp}{\mathrm{spec}}
\newcommand{\cH}{\mathcal{H}}
\newcounter{ContList}
\title[Paradoxicality for Graph Algebras]{Pure infiniteness and paradoxicality for graph $C^*$-algebras}
\author{Francesca Arici} 
\address{Institute for Mathematics, Astrophysics, and Particle Physics, Radboud University, Postbus 9010, 6500 GL Nijmegen,
The Netherlands}
\email{f.arici@math.ru.nl}
\author{Baukje Debets} 
\address{Section of Analysis, Department of Mathematics, KU Leuven,
Celestijnenlaan 200b - box 2400,
3001 Leuven, Belgium}
\email{baukje.debets@kuleuven.be}
\author{Karen R. Strung} 
\address{Institute for Mathematics, Astrophysics, and Particle Physics, Radboud University, Postbus 9010, 6500 GL Nijmegen,
The Netherlands}
\email{k.strung@math.ru.nl}
\keywords{purely infinite $C^*$-algebra, graph $C^*$-algebra, ample groupoid, paradoxical decomposition}
\subjclass[2010]{22A22, 46L05, 46L35, 37B10}
\date{\today}
\thanks{FA was partially supported by NWO under the VIDI-grant \mbox{016.133.326} and under the VENI-grant \mbox{016.192.237}. BD was partially supported by the European Research Council Consolidator Grant 614195. KS was funded by
Sonata 9 NCN grant 2015/17/D/ST1/02529 and a Radboud Excellence Initiative Postdoctoral Fellowship.}
\begin{document}
\begin{abstract}
We obtain necessary and sufficient conditions for pure infiniteness of the path groupoid $C^*$-algebra of a row-finite graph without sinks. In particular we show that for such a path groupoid $\mathcal{G}_E$, the properties of being essential principal and the existence of a basis of $(\mathcal{G}_E^a, 2, 1)$-paradoxical sets for the topology are not only sufficient, but also necessary.
\end{abstract}
\maketitle

\section{Introduction}

One of the most useful aspects of $C^*$-algebraic theory lies in its ability to interpret other mathematical or physical systems. Typically this involves input information from the system in question and an output $C^*$-algebra. Analysing the resulting $C^*$-algebra should then give information about the original system, and vice versa. The theory has been especially successful when the input is a directed graph. In this case, one constructs a universal $C^*$-algebra given by generators and relations determined by the vertex and edge sets of the graph. That one could construct a $C^*$-algebra for a directed graph was first shown in \cite{Watatani1982}, where directed graphs were associated to the incidence matrices of topological Markov chains and more general shifts of finite type; from these one constructs ${C}^*$-algebras using the methods of Cuntz and Krieger \cite{Cuntz1980}. Properties of a graph $C^*$-algebra $A$ are quite tractable in comparison to arbitrary $C^*$-algebras, thanks to the combinatorial techniques available for the underlying graph. For example, one can read off the ideal structure of $A$ directly from the graph \cite{BaHoReSz02}. Conversely, one can also use the structure of the $C^*$-algebra to identify information about the graph, for example, if $A$ is approximately finite (AF), then the graph cannot have loops \cite{KuPaRae98}.

In addition to the Cuntz--Krieger construction, one can start with a row-finite directed graph and first construct a locally compact \'etale groupoid coming from shift-equivalence of infinite paths \cite{KPRR:CK}. Then, using the construction of Renault \cite{Re80}, one can construct the (reduced) groupoid $C^*$-algebra.  Once again, it is possible to determine information about the graph $C^*$-algebra from its groupoid and vice versa. In this paper, we are interested in determining properties of the groupoid 
 of a row-finite directed graph that imply that the corresponding \mbox{$C^*$-algebra} is purely infinite.

Pure infiniteness was introduced for simple $C^*$-algebras by Cuntz \cite{Cuntz1977a,Cuntz1977} as a $C^*$-algebraic analogue of the behaviour of type III von Neumann algebra factors. In the simple case, pure infiniteness implies a number of interesting structural properties. For example, if $A$ is purely infinite and simple then it cannot admit any non-trivial traces. This can be seen by the fact that the nonzero projections in a purely infinite $C^*$-algebras are always infinite, which is to say, they are always Murray--von Neumann equivalent to a proper subprojection \cite{Lin1991}.  Furthermore, a simple purely infinite $C^*$-algebra always has an abundant supply of projections---every nonzero hereditary $C^*$-subalgebra contains an infinite projection. In fact, for simple $C^*$-algebras this property is equivalent to being purely infinite and can be taken as the definition \cite{Zhang1990}.

In the non-simple case, it is not immediately clear what the appropriate notion of pure infiniteness should be. Cuntz's original formulation implies simplicity, so one is tempted to alternatively define a non-simple $C^*$-algebra to be purely infinite if every  nonzero hereditary $C^*$-subalgebra contains an infinite projection. Although this property, which we call Condition (IH) in the sequel, is interesting in its own right, it is not enough to guarantee some of the more important consequences of pure infiniteness that one sees in the simple case. In particular, one would like that a (simple or otherwise) purely infinite $C^*$-algebra cannot admit nonzero traces and that the tensor product of \emph{any} (simple or otherwise) $C^*$-algebra with the Cuntz algebra $\mathcal{O}_{\infty}$ is purely infinite.  A satisfactory definition was given in \cite{KiRo00}, which asks for a certain infinite condition for positive elements via Cuntz comparison. 

For the Cuntz--Krieger construction of non-simple graph $C^*$-algebras of row-finite graphs, sufficient conditions guaranteeing pure infiniteness are already known \cite{HoSz03}. However, if one forgets the Cuntz--Krieger construction and relies only on the groupoid model, such conditions are only known in the case of a simple graph $C^*$-algebras.  In this paper, we prove the following. 

\begin{thm*}[see Theorem {\bf \ref{thm:main}}]
Let $E$ be a row-finite directed graph without sinks. The following are equivalent:
\begin{enumerate}
\item $C^*(\cG_E)$ is purely infinite;
\item $\cG_E$ is essentially principal and for every finite path $\alpha$, the cylinder set $Z(\alpha)$ is paradoxical;
\item $E$ satisfies Conditions (K) and (DI);
\item $E$ satisfies Conditions (K) and (DL).
\end{enumerate}
\end{thm*}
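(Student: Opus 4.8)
The plan is to prove the theorem as a web of equivalences, isolating the single analytic step $(1)\Leftrightarrow(2)$, which passes through the groupoid, from the purely combinatorial steps $(2)\Leftrightarrow(3)\Leftrightarrow(4)$, which translate the conditions on $\cG_E$ into statements about finite paths in $E$. First I would record the standing facts available when $E$ is row-finite and has no sinks: $\cG_E$ is a Hausdorff, second-countable, ample groupoid, it is amenable so that $C^*(\cG_E)=C^*_r(\cG_E)\cong C^*(E)$, and the cylinder sets $Z(\alpha)$ form a basis for the topology of the unit space $\cG_E^{(0)}$, with each $\chi_{Z(\alpha)}$ a projection in $C^*(\cG_E)$. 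These reduce the whole statement to assertions about compact open bisections and finite paths.

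For $(2)\Rightarrow(1)$ I would invoke the general theory of paradoxical ample groupoids: once $\cG_E$ is essentially principal the ideals of $C^*(\cG_E)$ correspond to the open invariant subsets of $\cG_E^{(0)}$, and a basis of $(\cG_E^a,2,1)$-paradoxical compact open sets forces every nonzero positive element to be properly infinite, which is exactly pure infiniteness in the sense of Kirchberg--Rørdam. The substantial direction is the converse $(1)\Rightarrow(2)$, asserting \emph{necessity}. Essential principality is the easier half: were it to fail, some closed invariant subset of $\cG_E^{(0)}$ would carry nontrivial interior of isotropy, yielding a quotient of $C^*(\cG_E)$ with a corner of the form $M_k(C(\T))$; such a corner admits a nonzero trace and so is not purely infinite, and since pure infiniteness passes to quotients this contradicts $(1)$. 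For paradoxicality I would argue contrapositively: $\chi_{Z(\alpha)}$ is a projection, pure infiniteness makes it properly infinite, and the task is to \emph{localize} the partial isometries witnessing proper infiniteness inside the groupoid---approximating them by linear combinations of compact open bisections and extracting from these an honest $(\cG_E^a,2,1)$-paradoxical decomposition of $Z(\alpha)$.

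For $(2)\Leftrightarrow(3)$ I would use the graph/groupoid dictionary: $\cG_E$ is essentially principal---equivalently, its restriction to every closed invariant subset of $\cG_E^{(0)}$ is topologically principal---exactly when $E$ satisfies Condition (K), while paradoxicality of every cylinder set translates, path by path, into Condition (DI). The remaining equivalence $(3)\Leftrightarrow(4)$ is then a self-contained combinatorial lemma comparing (DI) and (DL) directly on $E$, with no reference to the $C^*$-algebra. I expect the main obstacle to be the necessity step $(1)\Rightarrow(2)$: passing from the abstract proper infiniteness of $\chi_{Z(\alpha)}$---a statement about Cuntz subequivalence of positive elements---to a genuinely dynamical paradoxical decomposition requires controlling the bisection-approximation of the implementing partial isometries and arranging their supports to be disjoint compact open sets that realize the doubling, and it is here that the étale, ample structure of $\cG_E$ and the density of the cylinder bisections must be used most carefully.
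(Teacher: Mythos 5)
Your architecture matches the paper only on the easy half: $(2)\Rightarrow(1)$ via the B\"onicke--Li result \cite[Corollary 4.12]{paradoxical} (this is exactly Proposition~\ref{prop:para_vetex}), and $(3)\Leftrightarrow(4)$ as a combinatorial lemma (Corollary~\ref{cor:II iff III}). The genuine gap is your necessity step $(1)\Rightarrow(2)$, which is not a proof but a restatement of the problem. For the paradoxicality half you propose to take the partial isometries witnessing proper infiniteness of $\chi_{Z(\alpha)}$, approximate them by linear combinations of characteristic functions of compact open bisections, and ``extract'' a $(\cG_E^a,2,1)$-paradoxical decomposition from the approximants. No mechanism for this extraction is given, and none is known: the approximants are linear combinations of bisections with overlapping supports, and there is no known way to resolve the overlaps into an honest disjoint doubling. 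Indeed the paper explicitly remarks, after Theorem~\ref{thm:main}, that for a general ample amenable \'etale groupoid the converse of the B\"onicke--Li theorem --- pure infiniteness of $C^*(\cG)$ forcing a basis of paradoxical sets --- is \emph{not known}. So the step you yourself flag as ``the main obstacle'' is precisely the step your strategy cannot close, and the whole point of the paper is that for path groupoids one can avoid it entirely.

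What the paper does instead is convert statement (1) into graph combinatorics \emph{before} touching paradoxicality, using the Cuntz--Krieger model: by the Hong--Szyma\'nski theorem (Theorem~\ref{thm:HoSz}), $C^*(\cG_E)\cong C^*(E)$ is purely infinite if and only if $E$ satisfies (K) and every vertex in every maximal tail $M$ connects to a loop in $M$. A purely combinatorial proposition (using a Zorn's lemma construction of a maximal tail from the failure of (DL)) identifies the maximal-tail condition with Condition (DL), giving $(1)\Leftrightarrow(4)$ (Corollary~\ref{cor:KIII-pi}); Condition (K) makes (DL) and (DI) equivalent (Corollary~\ref{cor:II iff III}); and $(3)\Rightarrow(2)$ holds because (K) implies essential principality by \cite[Proposition 6.3]{KPRR:CK}, while (DI) lets one \emph{construct} explicit paradoxical decompositions from the two distinct cycles available at vertices of $V^2$ (Proposition~\ref{prop:IIPara}). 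The proof is thus a cycle $(2)\Rightarrow(1)\Rightarrow(4)\Rightarrow(3)\Rightarrow(2)$ in which pure infiniteness is only ever \emph{used} through its combinatorial characterization, never dismantled into partial isometries. Note also that your claimed ``path by path'' translation of paradoxicality into (DI) is not how the paper argues either: the implication (all $Z(v)$ paradoxical) $\Rightarrow$ (DI) is proved contrapositively through the $C^*$-algebra (failure of (DI) yields a maximal tail with no loop connected to $v$, hence a non-purely-infinite algebra by Theorem~\ref{thm:HoSz}, hence some non-paradoxical cylinder set). Your sketch for the essential-principality half of $(1)\Rightarrow(2)$ is repairable via the graph dictionary (failure of (K) gives a quotient graph with a loop without exit, hence a corner Morita equivalent to $C(\T)$ admitting a trace), but to make your proposal a proof you must replace the direct analytic $(1)\Rightarrow(2)$ by the detour through $C^*(E)$ and the maximal-tail characterization.
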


Condition (K) was introduced in \cite{KPRR:CK} where it was used to generalise previous results from finite to infinite graphs. Conditions (I), (DI), and (DL) are conditions on the behaviour of paths in the graph and their relation to cylinder sets in the groupoid (see Definition~\ref{ConditionDefs}). The notion of paradoxicality of a cylinder set (Definition~\ref{def:paradoxical} below) comes from to the notion of paradoxicality for more general groupoids, introduced in \cite{paradoxical}, which in turn was inspired by similar definitions for group actions (see for example \cite{Rordam2012, Kerr2012}). Essentially, it means that we are able to see the peculiar Banach--Tarksi-like behaviour witnessed by infinite projections---that they can be decomposed into subprojections of equal size---at the level of the groupoid. 

The paper is organised as follows. In Section~\ref{Sect:Pre} we recall the constructions of graph $C^*$-algebras from a Cuntz--Krieger $E$-family and from the path groupoid, as well as some of the properties we will need in the sequel.

 In Section~\ref{Sect:PI} we sketch the ideas behind the characterisation of Condition (IH) in the groupoid model and discuss the difficulties of a straightforward generalisation to the non-simple case. Finally, we prove the main result, Theorem~\ref{thm:main}.

\subsubsection*{Acknowledgments} 
The authors wish to thank Christian B\"onicke, Kang Li, Bram Mesland, Bartosz Kwa\'{s}niewski, and Adam Rennie for helpful discussions. The main theorem of this note appears as part of the second author's Master's thesis, for which the first and third authors were supervisors. A special thanks goes out to Klaas Landsman, the third supervisor of the project, for his support.

\section{Preliminaries}
\label{Sect:Pre}
A \emph{directed graph} $E$ is a quadruple $(E^0, E^1, t, s)$ consisting of countable sets $E^0$ and $E^1$, called the set of \emph{vertices} and set of \emph{edges}, respectively, and two functions $t,s: E^1 \to E^0$ called the \emph{target} and \emph{source} maps. Note that some authors, such as  \cite{Rae05}, use the opposite convention for the maps $t$ and $s$. In what follows, we are only interested in graphs which are row-finite, as these are the graphs which allow for a groupoid model. A graph $E$ is called \emph{row-finite} if $s^{-1}(v)$ is a finite set for all $v\in E^0 $. A \emph{sink} is a vertex $v \in E^0$ such that $s^{-1}(v) = \emptyset$ and a $\emph{source}$ is a vertex $v \in E^{0}$ such that $t^{-1}(v) = \emptyset.$

\subsection{Cuntz--Krieger algebras}
The typical way of constructing a graph $C^*$-algebra is to realise it as a universal $C^*$-algebra from a \emph{Cuntz--Krieger E-family}.

\begin{definition}
\label{ckfamilie}
Let $E$ be a row-finite directed graph and $\cH$ a Hilbert space. A \emph{Cuntz--Krieger $E$-family $\left\lbrace S,P \right\rbrace$ on $\cH$} consists of a set $\left\lbrace P_v : v\in E^0 \right\rbrace$ of mutually orthogonal projections on $\cH$ and a set $\left\lbrace S_e : e\in E^1\right\rbrace$ of partial isometries on $\cH$, such that the following two conditions, called the Cuntz--Krieger relations, hold:
\begin{enumerate}
\item $S_e^*S_e=P_{t(e)}$ for all $e\in E^1$;
\item $P_v=\sum_{\left\lbrace e\in E^1 : s(e)=v\right\rbrace}S_eS_e^* $ whenever $v$ is not a sink.
\end{enumerate}
\end{definition} 
\noindent The \emph{graph $C^*$-algebra} $C^*(E)$ is then defined to be the universal $C^*$-algebra generated by $\left\lbrace S, P\right\rbrace$, subject to the Cuntz--Krieger relations. 

A \emph{finite path} in $E$ is a sequence $\mu=(\mu_1,...,\mu_k)$ of $k$ edges, $k \in \mathbb{N} \setminus \left\lbrace0\right\rbrace$, with $s(\mu_{i+1})=t(\mu_i)$ for $1\leq i\leq k-1 $. We extend the source and target maps by defining $s(\mu)=s(\mu_1)$ and $t(\mu)=t(\mu_k)$ and we denote the \emph{length} of $\mu$ by $|\mu|=k$.  If we denote by $E^k$ the set of paths of length $k$ in $E$, then the elements of $E^0$ (the vertices of $E$) can be regarded as paths of length $0$. A \emph{loop} (or \emph{cycle}) is a finite path $\mu$ such that $s(\mu)=t(\mu)$. We say that $\mu$ has an \emph{exit} if there exists $1\leq i \leq |\mu|$ and $e\in E^1$ such that $s(\alpha_i) =s(e)$ and $\alpha_i \neq e$. 

Define 
\[ E^* := \bigcup_{n\geq 0} E^n,\] the set of finite paths in $E$. If $\mu \in E^*$ is a finite path, then we denote by $S_{\mu}$ the element $S_{\mu_1} S_{\mu_2} \cdots S_{\mu_k} \in C^*(E)$, and we have
\[C^*(E)= \overline{\text{{span}}}\left\lbrace S_\mu S_\nu^* : \mu,\nu \in E^*, t(\mu)=t(\nu) \right\rbrace.\]

\subsection{The path groupoid}
We can also associate a $C^*$-algebra to a row-finite directed graph $E$ via a groupoid. Let $\mathcal{G}$ be a locally compact and Hausdorff groupoid with locally compact unit space $\mathcal{G}^{(0)}$. Denote the range and domain maps by $r, d : \mathcal{G} \to \mathcal{G}^{(0)}$. The ordered pair $(g, h) \in \cG \times \cG$ is composable if $d(g) = r(h)$ and if so the composition is denoted $gh$. The set of composable pairs is given by 
\[ \mathcal{G}^{(2)} = \left\lbrace (g, h) \in \mathcal{G} \times \mathcal{G} \mid d(g) = r(h) \right\rbrace.\]
  The inverse of $g \in \mathcal{G}$ is denoted $g^{-1}$. A groupoid is \emph{\'etale} if $r$ and $d$ are local homeomorphisms. In this case $\mathcal{G}^{(0)}$ is an open subset of $\mathcal{G}$ and there is a canonical Haar system is given by counting measures. An open subset $U \subset \mathcal{G}$ is called an \emph{open bisection} if both $d|_U$ and $r|_U$ are homeomorphisms onto their ranges.  If $\mathcal{G}$ is \'etale and $\mathcal{G}^{(0)}$ is totally disconnected, then the groupoid $\mathcal{G}$ is said to be \emph{ample}. Equivalently, an \'etale groupoid is ample if it has a basis of compact open bisections. 

Let $E$ be a row-finite directed graph. An \emph{infinite path} is an infinite sequence of  edges $x_1, x_2, \dots$ with the property that $s(x_{i+1})=t(x_i)$ for every $i \geq 1$. The \emph{infinite path space} is defined to be
\[E^{\infty}= \left \lbrace(x_1,x_2,...) :  x_i \in E^1 , t(x_i) =s(x_{i+1}) \,\, \forall i \geq 1 \right\rbrace.\]

The infinite path space is a subset of the product space $\prod_{i=1}^\infty E^1$ and thus inherits the product topology for which the \emph{cylinder sets} \[Z(\mu)=\left\lbrace x\in E^\infty : x_1=\mu_1, ..., x_{|\mu|}=\mu_{|\mu|}\right\rbrace, \] 
with $\mu \in E^*$, form a basis of open sets. Observe that the cylinder sets are also closed since  $\bigcup_{i=1}^{|\mu|} \pi^{-1}_i (E^1\setminus \left\lbrace \mu_i \right\rbrace)$ is open in $\prod_{i=1}^\infty E^1$ and
 \[E^\infty \setminus Z(\mu) = E^\infty \cap \left( \bigcup_{i=1}^{|\mu|} \pi^{-1}_i (E^1\setminus \left\lbrace \mu_i \right\rbrace)\right).\]  
The cylinder sets form a basis for a locally compact, $\sigma$-compact, totally disconnected, Hausdorff topology on $E^{\infty}$ \cite[Corollary~2.2]{KPRR:CK}.  

\begin{definition}
For a directed graph $E$, define the associated \emph{path groupoid} by \[\cG_E = \left\lbrace (x,k, y) \mid x,y \in E^{\infty},\ k \in \Z, \ \exists N \in \N \text{ with }  x_i=y_{i+k}\,\,  \forall  i\geq N \right\rbrace,\] with unit space $\cG^{(0)} \cong E^{\infty}$, and domain and range maps
\[ d,r : \cG \to \cG^{(0)}\]
given by $d((x,k,y)) = y$ and $r((x,k,y)) = x$. Composition and inverse are given by 
\[(x,k,y)(y,l,z) = (x,k+l,z), \qquad (x,k,y)^{-1} = (y,-k,x).\]
\end{definition}
Observe that $(x,k,y) \in \cG_E$ if and only if $x=wz$ and $y=vz$ where $w, v$ are finite paths whose lengths satisfy $|w|+ k =|v|$.

We could also describe the path groupoid by using the \emph{shift map} $\sigma : E^\infty \to E^\infty$ defined by $(\sigma x)_i= x_{i+1}$ for all $i \in \Z^+$. This shift map is a local homeomorphism and hence $(E^\infty, \sigma) $ is a one-sided subshift of finite type over the alphabet $E^1$.  The groupoid $\cG_E$ thus arises from equivalence with lag of infinite paths, that is,
\[ \cG_E =  \left\lbrace (x,k,y) \mid x,y \in E^\infty,\ k \in \Z,\  \exists N \in \N \text{ with }  \sigma^N (x)=\sigma^{N+k} (y) \right\rbrace. \]

The groupoid $\cG_E$ can be endowed with a topology with respect to which it is a second countable locally compact \'etale Hausdorff groupoid. A basis of this topology is given by

\[Z(\alpha, \beta)= \left\lbrace (x,k,y) \mid  x\in Z(\alpha),\ y\in Z(\beta),\ k=|\beta|-|\alpha|,\ x_i=y_{i+k} \text{ for } i>|\alpha| \right\rbrace.\]
where $\alpha,\beta \in E^*$ are (possibly empty) paths with $t(\alpha)= t(\beta)$. It is not hard to check that $Z(\alpha, \beta)$ is a compact open bisection, hence $\G_E$ is ample.

To any locally compact \'etale groupoid $\mathcal{G}$, we associate its reduced groupoid $\mathrm{C}^*$-algebra as follows. Let $C_c(\mathcal{G})$ denote the (vector space of) compactly supported continuous functions on $\mathcal{G}$. For $f_1, f_2, f \in C_c(\mathcal{G})$ we define multiplication and involution by
\[ (f_1 \cdot f_2) (g) = \sum_ {h_1 h_2= g} f_1(h_1) f_2(h_2), \text{ for all } g \in \mathcal{G}, \]
and 
\[ f^*(g) = \overline{f(g^{-1})}. \]
With these operations $C_c(\mathcal{G})$ is a $^*$-algebra. For every $x \in \mathcal{G}^{(0)}$, let $\ell^2(d^{-1}(x))$ denote the Hilbert space of square-summable functions on $d^{-1}(x)$. From this we can define a $^*$-representation 
\[ \pi_x :  C_c(\mathcal{G}) \to \mathcal{B}(\ell^2(d^{-1}(x))) \]
by
\[ (\pi_x(f)\xi)(g) = \sum_{h_1 h_2 = g} f(h_1) \xi(h_2), \]
for $f \in C_c(\mathcal{G})$, $\xi \in \ell^2(d^{-1}(x))$, $g \in d^{-1}(x)$.  The reduced groupoid $C^*$-algebra, denoted $C^*_r(\mathcal{G})$ is the completion of $C_c(\mathcal{G})$ with respected to the norm
\[ \|f \| = \sup_{x \in \mathcal{G}^{(0)}} \| \pi_x(f) \|.\]

One may also define a full groupoid $C^*$-algebra $C^*(\mathcal{G})$. In the case that the groupoid is amenable, as is the case for the groupoid associated to a row-finite directed graph \cite[Theorem~4.2]{Pa02}, the two coincide.

Let $\G_E$ be the path groupoid of the row-finite directed graph $E$ with no sinks. It is easy to check that for two basis sets $Z(\alpha,\beta)$,  $Z(\gamma,\delta)$ their intersection is $Z(\alpha,\beta)$, $Z(\gamma,\delta)$ or the empty set. The assumption that $E$ has no sinks implies that for all $\alpha,\beta \in E^*$ with $t(\alpha)=t(\beta)$, the sets $Z(\alpha)$, $Z(\beta)$ and $Z(\alpha,\beta)$ are nonempty. Thus for any row-finite directed graph $E$ without sinks, $C^*(\cG_E)$ is generated by a Cuntz--Krieger $E$-family obtained by looking at characteristic functions on bisections of the path groupoid. Furthermore, as was mentioned above, by \cite[Theorem~4.2]{Pa02} the groupoid $\cG_E$ is always amenable. In summary, we have the following:

\begin{theorem}[{\cite[Section 4]{KPRR:CK}}]
\label{isomorphic algebras}
Let $E$ be a row-finite directed graph without sinks, and let $\cG_E$ be the corresponding path groupoid. Then $\cG_E$ is an ample amenable locally compact Hausdorff groupoid and $C^*(\cG_E) \cong C^*(E)$.
\end{theorem}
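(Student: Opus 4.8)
The plan is to establish Theorem~\ref{isomorphic algebras} by constructing an explicit Cuntz--Krieger $E$-family inside $C^*(\cG_E)$ and invoking the universal property of $C^*(E)$, then checking that the resulting $*$-homomorphism is an isomorphism. The groupoid-theoretic facts---that $\cG_E$ is ample, \'etale, locally compact, and Hausdorff---are already assembled in the preceding discussion (the cylinder-based topology makes $\cG^{(0)}\cong E^\infty$ totally disconnected, the sets $Z(\alpha,\beta)$ are compact open bisections, and amenability is cited from \cite[Theorem~4.2]{Pa02}), so the real content to verify is the isomorphism $C^*(\cG_E)\cong C^*(E)$, together with the assurance that all the relevant sets are nonempty so the family is nondegenerate.

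First I would define candidate generators as characteristic functions on bisections. For each vertex $v\in E^0$ set $P_v = \mathbf{1}_{Z(v,v)}$, where $Z(v,v)\subseteq \cG^{(0)}$ is the clopen subset of $E^\infty$ consisting of infinite paths starting at $v$; and for each edge $e\in E^1$ set $S_e = \mathbf{1}_{Z(e,t(e))}$. Because $E$ has no sinks, every $Z(\alpha)$ and every $Z(\alpha,\beta)$ with $t(\alpha)=t(\beta)$ is nonempty, so none of these functions is zero. I would then verify directly, using the convolution product $(f_1\cdot f_2)(g)=\sum_{h_1h_2=g}f_1(h_1)f_2(h_2)$ and the involution $f^*(g)=\overline{f(g^{-1})}$, that the $P_v$ are mutually orthogonal projections, that each $S_e$ is a partial isometry, and that the two Cuntz--Krieger relations hold: $S_e^*S_e = P_{t(e)}$ follows by computing the convolution on the bisection $Z(e,t(e))$, and $P_v=\sum_{s(e)=v}S_eS_e^*$ (a finite sum by row-finiteness) expresses the partition $Z(v)=\bigsqcup_{s(e)=v}Z(e)$ of clopen cylinder sets. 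By the universal property of $C^*(E)$ these relations produce a $*$-homomorphism $\phi\colon C^*(E)\to C^*(\cG_E)$ sending $S_\mu S_\nu^*\mapsto \mathbf{1}_{Z(\mu,\nu)}$.

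Next I would argue surjectivity and injectivity of $\phi$. For surjectivity, I would show that the linear span of the characteristic functions $\{\mathbf{1}_{Z(\alpha,\beta)} : t(\alpha)=t(\beta)\}$ is dense in $C_c(\cG_E)$: since the $Z(\alpha,\beta)$ form a basis of compact open bisections and any compactly supported continuous function on an ample groupoid can be uniformly approximated by (indeed written as a finite linear combination of) such characteristic functions on a disjoint refinement, the image of $\phi$ contains a dense subalgebra, hence all of $C^*(\cG_E)$. For injectivity, I would apply the gauge-invariant uniqueness theorem for graph $C^*$-algebras: the gauge action of $\T$ on $C^*(\cG_E)$ is implemented by the cocycle $c(x,k,y)=k$ via $(z\cdot f)(x,k,y)=z^k f(x,k,y)$, the images $\phi(P_v)=\mathbf{1}_{Z(v,v)}$ are all nonzero, and $\phi$ is equivariant for the gauge actions on both sides; gauge-invariant uniqueness then forces $\phi$ to be injective. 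Combining density of the image with injectivity yields the isomorphism.

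The main obstacle I expect is not any single verification but the bookkeeping needed to make the convolution computations transparent---in particular checking the relation $S_e^*S_e=P_{t(e)}$ and the defining properties of a partial isometry directly from the formula $(f_1\cdot f_2)(g)=\sum_{h_1h_2=g}f_1(h_1)f_2(h_2)$, where one must identify precisely which composable pairs $(h_1,h_2)$ land in the supporting bisections. The fact that each $Z(\alpha,\beta)$ is a bisection keeps these sums finite and in fact at most a single term, which is exactly what makes the characteristic functions behave like matrix units; carrying this out carefully is the technical heart, while the appeal to gauge-invariant uniqueness packages the spectral content and avoids having to argue faithfulness of the representations $\pi_x$ by hand.
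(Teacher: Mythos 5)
Your proposal is correct, and in substance it is exactly the argument the paper sketches around this (cited) theorem: the characteristic functions $\mathbf{1}_{Z(v,v)}$ and $\mathbf{1}_{Z(e,t(e))}$ form a Cuntz--Krieger $E$-family whose nondegeneracy rests on the no-sinks hypothesis, this family generates $C^*(\cG_E)$ because the sets $Z(\alpha,\beta)$ form a basis of compact open bisections that are pairwise nested or disjoint, and amenability is imported from \cite{Pa02}. The one place you genuinely diverge from the cited source \cite{KPRR:CK} is injectivity: Kumjian--Pask--Raeburn--Renault prove the universal property of $C^*(\cG_E)$ directly, constructing from an \emph{arbitrary} Cuntz--Krieger $E$-family a representation of $C_c(\cG_E)$, whereas you map out of the universal algebra $C^*(E)$ and kill the kernel with the gauge-invariant uniqueness theorem. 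That route is legitimate and non-circular, since gauge-invariant uniqueness for row-finite graphs (\cite{BaPaReSz00}) is proved by purely $C^*$-algebraic means (AF fixed-point algebra and conditional expectation, no groupoids); it is the standard modern packaging, while the original argument buys self-containedness. Two small points to tidy: (i) with this paper's convention $k=|\beta|-|\alpha|$, the bisection $Z(e,t(e))$ lies in $c^{-1}(-1)$, so your $\phi$ intertwines $\gamma_z$ on $C^*(E)$ with the action at $\bar z$ on $C^*(\cG_E)$; either use the cocycle $-c$ or note that precomposing with $z\mapsto\bar z$ does not affect the applicability of gauge-invariant uniqueness. (ii) A general $f\in C_c(\cG_E)$ is a uniform limit of, but not literally equal to, a finite linear combination of characteristic functions of compact open bisections (continuous functions on a totally disconnected space need not be locally constant); since the approximants may be taken with support in a fixed compact set, uniform convergence still controls the $C^*$-norm, so the density you need does hold.
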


\section{Purely infinite path groupoid $C^*$-algebras} \label{Sect:PI}

 A projection $p$ in a $C^*$-algebra $A$ is said to be \emph{infinite} if it is Murray--von Neumann equivalent to a proper subprojection of itself, which is to say, there exists a partial isometry $v \in A$ such that $v^*v = p$ and $vv^* \lneqq p$. If $A$ is a simple unital $C^*$-algebra and $1_A$ is an infinite projection, then $A$ is called \emph{purely infinite}. The first examples of purely infinite simple $C^*$-algebras were the Cuntz algebras, $\mathcal{O}_n$, $n \in \mathbb{N}$, and $\mathcal{O}_{\infty}$. Purely infinite simple $C^*$-algebras boast a number of interesting properties: they are traceless, always have real rank zero (and hence always contain many projections), every nonzero hereditary $C^*$-subalgebra contains an infinite projection, every unitary can be approximated by a unitary of finite spectrum, and so  \cite{Zhang1990, Lin1991} (see also \cite[Proposition 4.1.1]{Rordam2002} for a further list). The Cuntz algebra $\mathcal{O}_{\infty}$ is particularly important among simple, separable, unital, purely infinite $C^*$-algebras. If $A$ is any simple, separable, nuclear, unital $C^*$-algebra, then $A \otimes \mathcal{O}_\infty$ is always purely infinite. Moreover if $A$ is a separable, nuclear, unital, purely infinite $C^*$-algebra, then $A \cong A \otimes \mathcal{O}_{\infty}$ \cite{Kirchberg2000}.
 
\subsection{Groupoid $C^*$-algebras and Property (IH)}

A groupoid $\G$ is called  \emph{topologically principal} (or \emph{essentially free}) if the set of points with trivial isotropy is dense in $\G^{(0)}$. We say that an \'etale groupoid $\G$ is \emph{locally contracting} if, for every nonempty open subset $U \subset \mathcal{G}^{(0)}$, there exist an open subset $V \subset U$ and an open bisection $S$ with $\overline{V} \subset d(S)$ and $d(\overline{V}S^{-1}) \subsetneqq V$.

 If $\G$ is a topologically principal \'etale groupoid which is locally contracting, then the reduced groupoid $C^*$-algebra $C^*_r(\G)$ has property (IH): every nonzero hereditary $C^*$-subalgebra of $C^*_r(\G)$ contains an infinite projection \cite[Proposition 2.4]{DLR97}. When a $C^*$-algebra $A$ is simple, property (IH) is equivalent to pure infiniteness \cite{Zhang1990}.
 
In the original statement of the theorem below, the graph $E$ is also assumed to be locally finite. This was required at the time for the groupoid to be amenable. Since this is no longer the case and local finiteness is not required elsewhere in their proof, we use the reformulation below.  See also  \cite[Proposition~5.3]{BaPaReSz00}, where the statement is proved in the Cuntz--Krieger model. 

\begin{theorem}[{\cite[Theorem 3.9]{KuPaRae98}}] \label{thm:IH} Let $E$ be a directed graph with no sinks.  Then $C^*(E)$ has property (IH) if and only if every vertex connects to a loop and every loop has an exit.
\end{theorem}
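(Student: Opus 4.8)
The plan is to read both implications off the groupoid, exploiting the machinery already set up. For the forward implication I would show that the two graph hypotheses force $\mathcal{G}_E$ to be simultaneously topologically principal and locally contracting, so that \cite[Proposition~2.4]{DLR97}, together with amenability and the isomorphism $C^*(\mathcal{G}_E)\cong C^*(E)$ of Theorem~\ref{isomorphic algebras}, yields property (IH). For the converse I would argue by contraposition, producing in each failure mode a nonzero hereditary subalgebra all of whose projections are finite.

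So first assume every loop has an exit and every vertex connects to a loop. A unit $x\in E^\infty$ has nontrivial isotropy in $\mathcal{G}_E$ exactly when $x$ is eventually periodic, and the density of the eventually aperiodic paths is precisely the standard translation of ``every loop has an exit'' (Condition (L)) into topological principality of $\mathcal{G}_E$; I would record this first. The heart of this direction is local contraction. Given a nonempty open $U\subseteq E^\infty$, choose a cylinder $Z(\mu)\subseteq U$. Since $t(\mu)$ connects to a loop, extend $\mu$ by a path $\lambda$ so that $w:=t(\mu\lambda)$ lies on a loop $\ell$. Set $\mu'=\mu\lambda$, let $V:=Z(\mu')\subseteq U$ (clopen, so $\overline{V}=V$), and take the compact open bisection $S:=Z(\mu'\ell,\mu')$. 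Then $d(S)=Z(\mu')=\overline{V}$, and the partial homeomorphism attached to $S$ sends $\mu'z\mapsto\mu'\ell z$, so its image is $Z(\mu'\ell)$. Because $\ell$ has an exit there is an infinite path extending $\mu'$ but diverging from the $\ell$-track (completed to infinity using that $E$ has no sinks), whence $Z(\mu'\ell)\subsetneqq Z(\mu')$; this is exactly $d(\overline{V}S^{-1})\subsetneqq V$. Note both hypotheses are used: connectivity to a loop produces $\ell$, and the exit gives the \emph{proper} containment.

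For the converse I would show that failure of either condition defeats (IH). If some loop $\ell$ has no exit, based at $w=s(\ell)$, then each vertex of $\ell$ has a unique outgoing edge, so $S_\ell^*S_\ell=S_\ell S_\ell^*=P_w$, i.e. $S_\ell$ is unitary in the corner $P_wC^*(E)P_w$; since the only paths out of $w$ are powers of $\ell$, one computes $P_wC^*(E)P_w=\overline{\mathrm{span}}\{S_\ell^{\,k}:k\in\mathbb{Z}\}\cong C(\mathbb{T})$, whose unit is a finite projection. If instead a vertex $v_0$ connects to no loop, let $H$ be the hereditary set of vertices reachable from $v_0$; no vertex of $H$ can lie on a loop (else $v_0$ would connect to it), so the subgraph on $H$ is acyclic and its graph algebra is AF by the AF-characterisation \cite[Theorem~2.4]{KuPaRae98}. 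The corner $P_{v_0}C^*(E)P_{v_0}$ sits inside the ideal generated by $\{P_v:v\in H\}$ as a hereditary subalgebra of this AF algebra, hence is itself AF and therefore stably finite, containing no infinite projection. Either way (IH) fails.

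The step I expect to demand the most care is the converse when a vertex fails to connect to a loop: identifying $P_{v_0}C^*(E)P_{v_0}$ as AF rests on genuine structure theory (acyclic row-finite graphs give AF algebras, and hereditary subalgebras of AF algebras remain AF and stably finite), and one must verify carefully that $H$ is hereditary and loop-free and that the corner really is captured inside the ideal it generates --- passing if necessary to the saturation, which one checks stays loop-free. The dual subtlety in the no-exit case, namely that $P_wC^*(E)P_w$ is exactly $C(\mathbb{T})$ and not something larger, is what forces the explicit use of the no-exit hypothesis to pin down the paths leaving $w$.
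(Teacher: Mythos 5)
Your proposal is correct and takes essentially the same route as the paper's (i.e.\ Kumjian--Pask--Raeburn's) argument: the forward direction by showing $\mathcal{G}_E$ is topologically principal and locally contracting and invoking \cite[Proposition 2.4]{DLR97} together with amenability and $C^*(\mathcal{G}_E)\cong C^*(E)$, and the converse by exhibiting, in each failure mode, a nonzero hereditary subalgebra that is commutative (loop without exit) or AF (vertex not connecting to a loop). The only difference is one of detail, not of method: you make explicit the contracting bisection $Z(\mu'\ell,\mu')$ and the corner computations $P_wC^*(E)P_w\cong C(\mathbb{T})$ and $P_{v_0}C^*(E)P_{v_0}$ AF, where the paper merely sketches these steps.
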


The proof amounts to showing that the path groupoid is topologically principal and locally contracting if and only if every vertex connects to a loop and every loop has an exit. For a unit $x \in \G_E^{(0)} = E^{\infty}$, one shows that non-trivial isotropy corresponds to eventual periodicity \cite[Lemma 3.2]{KuPaRae98}.  It follows that to show that $\G_E$ is topologically principal, it is enough to show that every vertex is the source of an aperiodic path, since this in turn implies that every basic set contains an aperiodic path. By \cite[Lemma 3.4]{KuPaRae98} this occurs exactly when every loop has an exit. This allows one to construct a path starting at a vertex $v$ that never passes through the same vertex twice, hence is aperiodic. Conversely, if there is a loop without an exit, then it is straightforward to find an eventually periodic path. The locally contracting condition is satisfied if every vertex connects to a vertex that has a return path with an exit. 

If there is a vertex which does \emph{not} connect to a loop, one shows that $C^*(\G_E)$ has a hereditary $C^*$-subalgebra which is approximately finite (AF). Similarly, if there is a loop that does not have an exit one can show that $C^*(\G_E)$ has a hereditary $C^*$-subalgebra that is Morita equivalent to a commutative $C^*$-algebra and hence does not have property (IH). In particular, $C^*(\G_E)$ does not have property (IH).

\subsection{Non-simple purely infinite $C^*$-algebras}
If $C^*(\G_E)$ is simple, which is the case if and only if every loop in $E$ has an exit and $E$ is cofinal (see \cite[Proposition 5.1]{BaPaReSz00}), then the above implies $C^*(\G_E)$ is purely infinite. However, for the non-simple case, we require a stronger condition. 

Extending the definition of pure infiniteness from simple $C^*$-algebras to non-simple $C^*$-algebras is a subtle matter. One needs to decide which key properties of pure infiniteness should hold in the non-simple case, such as $\mathcal{O}_{\infty}$-absorption and admitting no nonzero traces.  To do so, one needs to consider subequivalence of positive elements, as introduced by Cuntz in \cite{MR0467332}. Let $A$ be a $C^*$-algebra and $a, b \in A$ with $a, b \geq 0$. We say that $a$ is \emph{Cuntz-subequivalent} to $b$, written $a \precsim b$, if there exists a sequence $(r_n)_{n \in \mathbb{N}} \subset A$ such that $\| r_n b r_n^* - a \| \to 0$ as $n \to \infty$. More generally, if $a \in M_n(A)_+$ and $b \in M_m(A)_+$, then we define $a \precsim b$ if there exists a sequence of rectangular matrices $(r_n)_{n \in \mathbb{N}} \subset M_{n,m}(A)$  such that $\|r_n^*b r_n - a \| \to 0$ as $n \to \infty$. If $a, b \in A$ then we define the direct sum of $a$ and $b$ by
\[ a \oplus b = \left( \begin{array}{cc} a & 0 \\ 0 & b \end{array} \right) \in M_2(A).\]

 The definition below is due to Kirchberg and R{\o}rdam \cite[Definition 4.1, Theorem 4.16]{KiRo00}.

\begin{definition}
\label{def:KR_PI}
Let $A$ be a $C^*$-algebra. We say $A$ is \emph{purely infinite} if $a \oplus a \precsim a$ for every nonzero positive element $a \in A$. 
\end{definition}

This is already enough to imply that if $A$ is purely infinite, then $A$ admits no nonzero traces and that $A \otimes \mathcal{O}_\infty$ is always purely infinite. On the other hand, some other properties one might expect are not automatic. However, if $A$ is separable, nuclear, and has real rank zero then the situation begins to look a lot more like the simple case; for example $A$ is purely infinite if and only if $A \otimes \mathcal{O}_{\infty}$ is purely infinite, if and only if all nonzero projections in $A$ are properly infinite \cite{Pasnicu2007}. We will see below that real rank zero is automatic in the setting of graph algebras.

Let $E$ be a row-finite directed graph with no sinks. For all vertices $v,w \in E^0$, we write $v \geq w$ if there exists a finite path $\alpha \in E^*$ such that $s(\alpha)=v$ and $t(\alpha)=w$. Obviously for all $v\in E^0$ we have $v\geq v$ (letting $\alpha$ be the empty path) and for all $u,v,w\in E^0$ it follows that $u \geq v$ and $v \geq w$ imply $u \geq w$. Thus $\geq$ is a preorder. Note that it is not a partial order as there might be two vertices $v\neq w$ on a cycle such that $v\geq w \geq v$.

\begin{definition}
\label{maximal tail}
Let $M\subseteq E^0$ be a non-empty subset. Then we call $M$ a \emph{maximal tail} if the following three conditions hold:
\begin{enumerate}
\item If $v\in E^0$, $w\in M$, and $v\geq w$, then $v\in M$;
\item If $v \in M$ and $s^{-1}(v) \neq \emptyset$, then there exists $e\in E^1$ with $s(e)=v$ and $t(e) \in M$;
\item For every $v,w \in M$ there exists $y\in M$ such that $v \geq y$ and $w \geq y$.
\end{enumerate}
\end{definition}
Let $E$ be a row-finite directed graph. We define $V^2$ to be the set of vertices $v$ for which there are at least two distinct finite cycles based at $v$, that is,
\begin{eqnarray*}
V^2 &=& \{ v \in E^0 \mid \text{ there are cycles } \mu \neq  \nu \text{ with } t(\mu_i) = t(\nu_j) = v  \\
&&\text{ if and only if } i = |\mu| \text{ and } j = |\nu| \}. 
\end{eqnarray*}
Similarly, we define $V^1$ to be the set of vertices that lie exactly in one cycle, and $V^0$ to be the set of vertices $v$ for which there is no cycle based at $v$.
\begin{definition}[{\cite[Section~6]{KPRR:CK}}]
A graph $E$ \emph{satisfies Condition (K)} if $V^1 = \emptyset$, or, equivalently, if $E^0=V^0 \cup V^2$.
\end{definition}

Note that condition (K) generalises Condition (II) in \cite{Cuntz1977}. Indeed, a \emph{finite} directed graph satisfies condition (K) if and only if the associated incidence matrix satisfies condition (II).

For a row-finite directed graph  $E$ we have the following result, due to Hong and Szymanski. Note that it does not require that every loop in $E$ has an exit or that $E$ is cofinal; in particular, the result holds for non-simple graph $C^*$-algebras. 

\begin{theorem}[{\cite[Theorems 2.3, 2.5]{HoSz03}}] \label{thm:HoSz}
Let $E$ be a row-finite directed graph. Then the following are equivalent:
\begin{enumerate}
\item $C^*(E)$ is purely infinite;
\item $C^*(E)$ is purely infinite and has real rank zero;
\item all loops in each maximal tail $M$ have exits in $M$ and each vertex in every maximal tail of $M$ connects to a loop in $M$;
\item $E$ satisfies Condition (K) and each vertex in every maximal tail of $M$ connects to a loop in $M$.
\end{enumerate}
\end{theorem}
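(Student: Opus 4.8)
The plan is to prove the cycle (1)$\Rightarrow$(3)$\Rightarrow$(2)$\Rightarrow$(1) together with the purely combinatorial equivalence (3)$\Leftrightarrow$(4), reducing the analytic content to the simple case by working one maximal tail at a time. I would dispose of (3)$\Leftrightarrow$(4) first. Both statements share the hypothesis that every vertex of each maximal tail $M$ connects to a loop inside $M$, so only the equivalence of ``every loop in $M$ has an exit in $M$'' with Condition (K) remains. For (4)$\Rightarrow$(3): if $v$ lies on a loop $\ell\subseteq M$ then $v\notin V^0$, so Condition (K) forces $v\in V^2$, producing a second cycle $\nu\neq\ell$ at $v$; since $\nu$ returns to $v\in M$ all its vertices lie in $M$ by the first maximal-tail axiom, and the edge where $\nu$ first diverges from $\ell$ is an exit for $\ell$ inside $M$. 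For (3)$\Rightarrow$(4): if Condition (K) fails, pick $v\in V^1$ with unique cycle $\ell$ and set $M=\{u\in E^0 : u\geq v\}$; one checks directly that $M$ satisfies the three maximal-tail axioms, and any exit of $\ell$ lying in $M$ would allow one to close up a second cycle at $v$, contradicting $v\in V^1$, so $\ell$ is a loop in $M$ without exit in $M$, contradicting (3).

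For (1)$\Rightarrow$(3) I would argue by contraposition. If (3) fails, then in some maximal tail $M$ either a loop has no exit in $M$, or some vertex fails to connect to a loop in $M$. In the first case the subquotient of $C^*(E)$ cut out by (a neighbourhood of) $\ell$ in $M$ contains a hereditary subalgebra Morita equivalent to $C(\mathbb{T})$; in the second case the relevant subquotient contains a loop-free, hence AF, hereditary subalgebra. Either way there is a nonzero, stably finite subquotient carrying a nonzero trace. Since Kirchberg--R\o{}rdam pure infiniteness (Definition~\ref{def:KR_PI}) passes to ideals and quotients \cite{KiRo00} and purely infinite algebras are traceless, this contradicts (1).

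For (3)$\Rightarrow$(2) I would first invoke the lemma above: (3) implies Condition (K), and for row-finite graphs Condition (K) is exactly what forces real rank zero and makes every ideal gauge invariant \cite{KPRR:CK}. By the cited result of Pasnicu \cite{Pasnicu2007}, for a separable nuclear real-rank-zero algebra pure infiniteness is equivalent to every nonzero projection being properly infinite, which in turn can be read off the subquotients. Using that ideals correspond to saturated hereditary sets and that every quotient $C^*(E)/I_H\cong C^*(E/H)$ is again a row-finite graph algebra inheriting (3), I would verify via Theorem~\ref{thm:IH} that each such quotient has property (IH): every vertex of $E/H$ connects to a loop and every loop of $E/H$ has an exit, both obtained by generating the maximal tail of an arbitrary vertex or loop and applying (3) inside it. The corresponding simple subquotients are then simple and purely infinite by the simplicity-plus-exits criterion \cite[Proposition 5.1]{BaPaReSz00}, and assembling these over the ideal lattice yields pure infiniteness and, with real rank zero, statement (2). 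Since (2)$\Rightarrow$(1) is trivial, the cycle closes.

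The main obstacle is precisely this last assembly. Property (IH) only guarantees that \emph{some} infinite projection exists in every hereditary subalgebra of every quotient, whereas Definition~\ref{def:KR_PI} demands that each fixed positive element satisfy $a\oplus a\precsim a$, equivalently that each fixed projection be properly infinite. Bridging this gap requires the real-rank-zero structure together with a Cuntz-comparison argument: one approximates $a$ by a projection, locates it within finitely many layers of the (possibly infinite) primitive ideal space, and uses the simplicity and pure infiniteness of the maximal-tail subquotients to dominate two orthogonal copies of $a$. Controlling this subequivalence uniformly across an infinite ideal lattice, rather than on a single simple quotient, is where the real work of the proof lies.
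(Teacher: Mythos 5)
The paper does not actually prove Theorem~\ref{thm:HoSz}: it is imported wholesale from Hong--Szyma\'nski \cite{HoSz03}, so your attempt has to stand entirely on its own. Its combinatorial portions essentially do. The equivalence $(3)\Leftrightarrow(4)$ is correct, up to one imprecision: given $v\in V^2$ on a loop $\ell$, your ``edge where $\nu$ first diverges from $\ell$'' presupposes that one of the two return paths at $v$ diverges from $\ell$ \emph{at a vertex of} $\ell$; this needs the small prefix argument that neither of two return paths hitting $v$ only at their end can be a proper prefix of the other, applied to the first return path of $\ell$. Likewise $(1)\Rightarrow(3)$ by contraposition is sound: for a maximal tail $M$ the set $E^0\setminus M$ is saturated and hereditary, the corresponding quotient is the graph algebra of the restricted graph, a loop without exit in $M$ gives a corner isomorphic to $C(\mathbb{T})$ and a vertex of $M$ not reaching a loop in $M$ gives a hereditary AF subalgebra, and pure infiniteness in the sense of Definition~\ref{def:KR_PI} passes to quotients and hereditary subalgebras \cite{KiRo00} while excluding stably finite ones.

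The genuine gap is the direction $(3)\Rightarrow(2)$, and you concede it yourself: after reducing to ``assembling these over the ideal lattice,'' your final paragraph states that bridging property (IH) in every quotient to $a\oplus a\precsim a$ for every positive $a$ is ``where the real work of the proof lies'' --- and then does not do that work. That bridge \emph{is} the theorem; everything preceding it is standard. Your sketch (approximate by a projection, locate it in finitely many layers of the primitive ideal space, dominate two copies from simple subquotients) is not an argument: a projection in $C^*(E)$ need not be comparable to anything living in a simple subquotient, and in non-simple algebras ``infinite'' does not imply ``properly infinite,'' which is exactly the distinction you must control. What is concretely missing is the implication: real rank zero (or the ideal property) together with ``every nonzero hereditary $C^*$-subalgebra of every quotient contains an infinite projection'' implies pure infiniteness. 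This is precisely Pasnicu--R{\o}rdam \cite[Proposition 2.11]{Pasnicu2007} --- the very result this paper invokes for its item (7) following Theorem~\ref{thm:main}. Had you cited it, your argument would close: Condition (K) makes every ideal gauge-invariant, so every quotient is a graph algebra inheriting (3), each such quotient has property (IH) by Theorem~\ref{thm:IH}, and Proposition 2.11 then yields pure infiniteness plus real rank zero. Two further inaccuracies: the equivalence of real rank zero with Condition (K) is due to Jeong--Park, not \cite{KPRR:CK} (the latter gives only the ideal-lattice description under (K)); and Theorem~\ref{thm:IH} as stated here assumes $E$ has no sinks, whereas Theorem~\ref{thm:HoSz} does not, so sinks must be handled separately (e.g.\ by adding tails as in \cite{BaPaReSz00}).
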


The goal of this section is to interpret the theorem above in terms of the path groupoid.  Unlike for graph $C^*$-algebras, for an arbitrary \'etale groupoid necessary and sufficient conditions for pure infiniteness are not known. This question was recently addressed in \cite{paradoxical}.  There, the authors establish a sufficient condition on an ample groupoid $\G$ that ensures pure infiniteness of the reduced \Cs algebra $C^*_r(\G)$ by extending the notion of paradoxical decompositions for actions of discrete groups on totally disconnected spaces to the setting of \'etale groupoids. Here, we define \emph{paradoxicality} for the path groupoid of a row-finite directed graph with no sinks.

\begin{definition}
\label{def:paradoxical}
 For a graph $E$ and a finite path $\mu \in E^*$, we say that the cylinder set $Z(\mu)$ is \emph{paradoxical} if there exist numbers $m,n \in \N$ and compact open bisections \[Z(\alpha_1, \beta_1), Z(\alpha_2, \beta_2) ,\dots ,Z(\alpha_n, \beta_n), Z(\gamma_1, \delta_1), Z(\gamma_2, \delta_2), \dots ,Z(\gamma_m, \delta_m),\] such that:
\[\bigcup_{i=1}^n Z(\beta_i) =\bigcup_{j=1}^m Z(\delta_i) = Z(\mu),\] 
 and the sets $\displaystyle Z(\alpha_i), Z(\gamma_j) \subseteq Z(\mu)$ are pairwise disjoint.
\end{definition}

In the language of \cite{paradoxical} this says that $Z(\mu)$ is ``$(\G^a, 2,1)$-paradoxical''. It is easy to see that for all $\mu \in E^*$, $Z(\mu)$ is paradoxical if and only if $Z(r(\mu))$ is paradoxical.

We will  need a stronger version of topological freeness. Let $\G$ be a locally compact groupoid. A subset $D \subset \G^{(0)}$ is called \emph{invariant} if for any $g \in \G$, $d(g) \in \G$ implies $r(g) \in \G$. Let $\G_D := \left\lbrace g \in \G \mid d(g) \in D\right\rbrace$. When $\G$ is \'etale and $D \subset \G^{(0)}$ is closed and invariant, then $\G_D$ is a closed  \'etale subgroupoid. We say $\G$ is \emph{essentially principal} if, for every closed invariant subset $D \subset \G^{(0)}$, the subgroupoid $\G_D$ is topologically principal.

\begin{prop}\label{prop:para_vetex}
Let $E$ be a row-finite graph without sinks. Suppose that $\G_E$ is essentially principal. Then $C^*(\cG_E)$ is purely infinite if the cylinder set $Z(v)$ is paradoxical for every $v \in E^0$.
\end{prop}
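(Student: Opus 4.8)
The plan is to obtain pure infiniteness by invoking the main sufficient condition of \cite{paradoxical}: for an ample, essentially principal Hausdorff groupoid $\G$, the reduced algebra $C^*_r(\G)$ is purely infinite whenever the unit space $\G^{(0)}$ admits a basis of compact open $(\G^a,2,1)$-paradoxical sets. Two of the three hypotheses are immediate here: $\G_E$ is ample by Theorem~\ref{isomorphic algebras}, and it is essentially principal by assumption. So the heart of the argument is to manufacture a basis of paradoxical sets out of the hypothesis that $Z(v)$ is paradoxical for every vertex $v$, and then to transfer the conclusion from the reduced to the full algebra.

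First I would upgrade the vertex hypothesis to all finite paths. The cylinder sets $\{Z(\mu)\mid \mu \in E^*\}$ form a basis of compact open subsets of $\G_E^{(0)}\cong E^\infty$, so it suffices to show that each $Z(\mu)$ is paradoxical. For a finite path $\mu$, concatenation $y\mapsto \mu y$ is a homeomorphism $Z(t(\mu))\to Z(\mu)$, implemented at the level of the groupoid by the compact open bisection $Z(\mu,t(\mu))$, whose range is $Z(\mu)$ and whose domain is $Z(t(\mu))$. Conjugating a paradoxical decomposition of $Z(t(\mu))$ by this bisection produces one for $Z(\mu)$; this is precisely the equivalence ``$Z(\mu)$ is paradoxical if and only if $Z(t(\mu))$ is paradoxical'' recorded after Definition~\ref{def:paradoxical}. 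Since $t(\mu)\in E^0$, the hypothesis gives that $Z(t(\mu))$ is paradoxical, hence so is $Z(\mu)$. This yields the desired basis of $(\G^a,2,1)$-paradoxical sets for the topology on $\G_E^{(0)}$.

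With the basis in hand, the cited theorem of \cite{paradoxical} applies directly and shows that $C^*_r(\G_E)$ is purely infinite in the sense of Definition~\ref{def:KR_PI}. To finish, I would use that $\G_E$ is amenable (again Theorem~\ref{isomorphic algebras}), so the reduced and full groupoid $C^*$-algebras coincide, $C^*_r(\G_E)=C^*(\G_E)$; thus $C^*(\G_E)$ is purely infinite.

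I expect the only genuine step to be the reduction from vertices to finite paths, i.e.\ verifying that conjugation by $Z(\mu,t(\mu))$ carries the data of a $(\G^a,2,1)$-paradoxical decomposition of $Z(t(\mu))$ to one of $Z(\mu)$ (the unions of ranges still cover, and the chosen sources remain pairwise disjoint). Everything else is an assembly of results already available in the excerpt. The one point requiring care is matching the hypotheses of the theorem of \cite{paradoxical} exactly---in particular that \emph{essential} principality (topological principality of every $\G_D$ for closed invariant $D\subseteq\G^{(0)}$) is the precise ingredient that upgrades the local paradoxical decompositions to pure infiniteness in the possibly non-simple setting, rather than a weaker combination such as topological principality together with minimality.
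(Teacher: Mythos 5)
Your proposal follows essentially the same route as the paper: pass from vertices to arbitrary cylinder sets via conjugation by the bisection $Z(\mu, t(\mu))$ (the equivalence recorded after Definition~\ref{def:paradoxical}), invoke \cite[Corollary 4.12]{paradoxical} to get pure infiniteness of the reduced algebra, and use amenability of $\cG_E$ to identify reduced and full $C^*$-algebras. That skeleton is correct.

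There is, however, one hypothesis of \cite[Corollary 4.12]{paradoxical} that your paraphrase of that result omits and that your argument never verifies: besides ampleness and essential principality, the corollary requires the groupoid to be \emph{inner exact} in the sense of \cite[Definition 3.5]{paradoxical}. This is exactly the point the paper's proof attends to: since $\cG_E$ is amenable, \cite[Lemma 6.1]{BrClSi15} and \cite[Remark 6.2]{BrClSi15} imply that $\cG_E$ is inner exact, and only then is the corollary applicable. You do invoke amenability, but only to conclude $C^*_r(\cG_E) = C^*(\cG_E)$ at the end; the same amenability also supplies inner exactness, so the gap is easily closed with an ingredient you already have on the table. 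Still, as written the hypotheses of the key citation are not matched --- and it is worth noting that you explicitly flagged ``matching the hypotheses of the theorem of \cite{paradoxical} exactly'' as the delicate point, but then identified essential principality (versus minimality) as the concern, when the hypothesis actually at risk of being overlooked is inner exactness. In the non-simple setting this is not a formality: inner exactness is what makes the ideal structure of $C^*_r(\cG_E)$ visible at the level of the groupoid, which is needed for pure infiniteness (a statement about all positive elements, hence sensitive to ideals) to follow from local paradoxicality data.
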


\begin{proof}
The path groupoid $\G_E$ is ample and by assumption essentially principal. Furthermore, since $\cG_E$ is amenable, \cite[Lemma 6.1]{BrClSi15} and \cite[Remark 6.2]{BrClSi15} imply that it is inner exact in the sense of \cite[Definition 3.5]{paradoxical}. Since $\left\lbrace Z(v) \mid v \in E^{0}\right\rbrace$ is a basis for the topology of $\G^{(0)} \cong E^{\infty}$, it  follows from \cite[Corollary 4.12]{paradoxical} that $C^*(\cG_E)$ is purely infinite if $Z(v)$ is paradoxical for every $v \in E^0$.
\end{proof}

We would like a condition on the graph $E$ that implies paradoxicality of $Z(v)$ for every $v \in V$.  To that end, we introduce the following  conditions.

\begin{definition} \label{ConditionDefs}
Let $E$ be a row-finite directed graph with no sinks.
\begin{enumerate} 
\item The graph $E$ satisfies Condition (I) if, for every $v \in E^0$, there exists a finite path $\alpha \in E^*$ with $s(\alpha) = v$ and $t(\alpha) \in V^2$.
\item The graph $E$ satisfies Condition (DI) if, for every $v \in E^0$, there exists a decomposition $Z(v) = \sqcup_{i=1}^n Z(\beta_i)$ such that for every $i = 1, \dots, n $ there exists a path $\alpha_i \in E^*$ with $t(\alpha_i) = t(\beta_i)$, $s(\alpha_i) = v$, and $\alpha_i$ passes through $V^2$.
\item The graph $E$ satisfies Condition (DL) if, for every $v \in E^0$, there exists a decomposition $Z(v) = \sqcup_{i=1}^n Z(\beta_i)$ such that for every $i = 1, \dots, n $ there exists a path $\alpha_i \in E^*$ with $t(\alpha_i) = t(\beta_i)$, $s(\alpha_i) = v$, and $\alpha_i$ passes through a loop.
\end{enumerate}
\end{definition}

Condition (I) was first defined in \cite{Cuntz1980} by Cuntz and Krieger for the Cuntz--Krieger algebras associated to an $n \times n$ $\left\lbrace0,1\right\rbrace$-matrix.  Note that (DI) implies (I) as well as (DI) implies (DL),  but neither converse need  hold. Let $E$ be a row-finite directed graph without sinks. We have already seen that Condition (K) holds whenever $C^*(\G_E) \cong C^*(E)$ is purely infinite.  We show in the sequel that (I), (DI), and (DL) are also necessary, and that when combined with Condition (K), either Condition (DI) or condition (DL) is enough to establish that $C^*(\G_E)$ is purely infinite. Condition (I), on the other hand, is not enough, even in the presence of Condition (K).

Condition (K) implies that $\G_E$ is essentially principal  \cite[Proposition 6.3]{KPRR:CK} and thus is required for our main result, Theorem~\ref{thm:main}.  In fact, Condition (K) is also necessary to ensure that for every $v \in E^0$, the cylinder set $Z(v)$ is paradoxical. Indeed, suppose that $(K)$ does not hold for $E$.  Then there exists a cycle $\mu$ in some $v$ with no other return path. If $Z(v)$ has a paradoxical decomposition, then \[x=\mu\mu\mu\dots \in Z(\beta_i)\cap Z(\delta_j) ,\]
 where \[\beta_i= \underbrace{\mu \dots \mu}_{k \text{ times}}\mu_1 \dots \mu_l \quad \text{ and }\quad \delta_j = \underbrace{\mu \dots \mu}_{n \text{ times}}\mu_1 \dots \mu_m,\] for some $k,l,n,m$. Then $\alpha_i, \gamma_j$ must be paths from $v$ to vertices on $\mu$, but these paths must be subpaths of $x$, for otherwise we would have two distinct return paths in $v$. Thus $Z(\alpha_i)\cap Z(\gamma_j) = Z(\mu_1 \dots \mu_p)\neq \emptyset$ for some $p \in \N$.  Hence $Z(v)$ cannot have a paradoxical decomposition. 
 
In view of this, let us examine Conditions (I), (DI) and (DL) in the presence of Condition (K).

 \begin{prop} 
 Let $E$ be a row-finite directed graph without sinks. Suppose that $E$ satisfies Condition (K). Then if the cylinder set $Z(v)$ is paradoxical for every $v \in E^0$, the graph $E$ satisfies Condition (I).
 \end{prop}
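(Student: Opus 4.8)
The plan is to prove the contrapositive at the level of a single vertex: assuming Condition (K), I will show that if some $v \in E^0$ admits \emph{no} finite path to $V^2$, then $Z(v)$ cannot be paradoxical. Fix such a $v$ and let $R(v) = \{w \in E^0 : v \geq w\}$ be the set of vertices reachable from $v$, with induced subgraph $E_R$. The first step is to observe that $E_R$ has no loops. Indeed, any vertex $w$ lying on a cycle has a cycle based at it and hence lies in $V^1 \cup V^2$; since Condition (K) forces $V^1 = \emptyset$, such a $w$ lies in $V^2$, and then $v \geq w$ would exhibit a path from $v$ into $V^2$, contrary to assumption. As $R(v)$ is hereditary (if $w \in R(v)$ and $e \in E^1$ has $s(e) = w$ then $t(e) \in R(v)$) and $E$ has no sinks, $E_R$ is again a row-finite graph with no sinks, its infinite path space contains $Z(v)$, and by Theorem~\ref{isomorphic algebras} we have $C^*(\G_{E_R}) \cong C^*(E_R)$. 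Because $E_R$ has no loops, $C^*(E_R)$ is AF, hence stably finite; in particular it contains no nonzero infinite projection.

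The second step is to turn a hypothetical paradoxical decomposition of $Z(v)$ into such a forbidden projection. All the bisections $Z(\alpha_i,\beta_i)$ and $Z(\gamma_j,\delta_j)$ of Definition~\ref{def:paradoxical} involve paths starting at $v$, so they are compact open bisections of the subgroupoid $\G_{E_R}$. After refining the domains --- appending edges to each $\beta_i$ and, simultaneously, to the matching $\alpha_i$, which preserves both the covering $\bigcup_i Z(\beta_i) = Z(v)$ and the pairwise disjointness of the ranges --- I may assume that $\{Z(\beta_i)\}_i$ is a partition of $Z(v)$. Then $W := \bigsqcup_i Z(\alpha_i,\beta_i)$ is a single compact open bisection of $\G_{E_R}$ with $d(W) = Z(v)$ and $r(W) = \bigsqcup_i Z(\alpha_i) \subseteq Z(v)$, so $S := \bid_W \in C^*(\G_{E_R})$ is a partial isometry with $S^*S = \bid_{Z(v)} = P_v$ and $SS^* = \bid_{r(W)}$. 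Since the ranges $Z(\alpha_i)$ are disjoint from the nonempty set $\bigcup_j Z(\gamma_j)$, we obtain $SS^* \leq P_v - \bid_{\bigcup_j Z(\gamma_j)} \lneqq P_v$. Thus $P_v$ is Murray--von Neumann equivalent to a proper subprojection of itself, i.e. $P_v$ is infinite. As $P_v \neq 0$ (no sinks, so $Z(v) \neq \emptyset$), this contradicts stable finiteness of $C^*(E_R)$, and the proposition follows.

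The main obstacle I anticipate is the bookkeeping in the second step: carrying the covering family $\{Z(\beta_i)\}_i$ to a genuine clopen partition while keeping the bisection structure intact and the ranges $Z(\alpha_i)$ pairwise disjoint, so that $W$ is really one compact open bisection and $\bid_W$ a partial isometry with exactly the asserted source and range projections. (Concretely, one can pass all $\beta_i$ to a common length $L$, use that cylinders of equal length are equal or disjoint, and select one refined bisection per length-$L$ cylinder in $Z(v)$; the ranges only shrink, so disjointness is retained.) The other ingredient to pin down is the standard fact that a row-finite graph with no loops has an AF graph $C^*$-algebra --- equivalently, that the acyclic path groupoid $\G_{E_R}$ has stably finite $C^*$-algebra --- after which the incompatibility of a nonzero infinite projection with stable finiteness closes the argument.
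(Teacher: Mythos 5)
Your proof is correct, and it takes a genuinely different route from the paper's. The paper argues globally and by black-boxing: assuming Condition (I) fails, it forms the same reachable subgraph (your $E_R$, the paper's $H$), notes that (K) makes it loop-free, invokes the ideal-lattice and hereditary-subalgebra results of \cite{KPRR:CK} and \cite{KuPaRae98} to realise the AF algebra $C^*(H)$ as a hereditary $C^*$-subalgebra of $C^*(\cG_E)$, and then contradicts pure infiniteness of the \emph{whole} algebra $C^*(\cG_E)$, which it obtains from paradoxicality of \emph{all} cylinder sets $Z(w)$ via Proposition~\ref{prop:para_vetex}, i.e.\ via \cite[Corollary 4.12]{paradoxical}, with (K) supplying essential principality. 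You instead localise and argue by hand: you only use paradoxicality of the single cylinder $Z(v)$, you observe that every path from $v$ stays in the hereditary set $R(v)$, so all the bisections of the decomposition lie in the subgroupoid $\cG_{E_R}$, and you convert the decomposition directly into a partial isometry $\bid_W$ witnessing that $P_v$ is an infinite projection in the AF, hence stably finite, algebra $C^*(E_R)$. Your refinement step is sound: passing to a common length $L$ and selecting one bisection per length-$L$ cylinder works because equal-length cylinders are equal or disjoint and the ranges $Z(\alpha_i\epsilon) \subseteq Z(\alpha_i)$ only shrink, so they stay pairwise disjoint and disjoint from $\bigcup_j Z(\gamma_j)$, which is nonempty since $E$ has no sinks. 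What your approach buys: it is self-contained (no appeal to \cite[Corollary 4.12]{paradoxical} nor to the hereditary-subalgebra machinery), it makes the intended link between paradoxicality and infinite projections explicit, and it yields the sharper vertex-wise statement that, under (K), paradoxicality of a single $Z(v)$ already forces $v$ to connect to $V^2$. What the paper's approach buys is brevity, since Proposition~\ref{prop:para_vetex} and the cited structural results are already on hand; both arguments share the two key facts that (K) forces the reachable subgraph to be loop-free and that loop-free row-finite graphs have AF $C^*$-algebras.
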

 
 \begin{proof}
Assume that the cylinder set $Z(v)$ is paradoxical for every $v \in E^0$ but that Condition (I) does not hold. Then there must be a vertex $v$ that is not connected to $V^2$, and because Condition (K) holds, $v$ is not connected to $V^1$ either. Following the proof of \cite[Theorem 3.9]{KuPaRae98} (see also the discussion following Theorem~\ref{thm:IH} in the previous subsection), let $H$ be the subgraph of $E$ formed by those vertices that can be reached from $v$. It is clear that $H$ has no loops and no exits.  By \cite[Theorem 6.6]{KPRR:CK}, there exists an isomorphism between the lattice of ideals in the groupoid  $C^*$ algebra $C^*(\cG_E)$ and the lattice of saturated subsets of $E^0$ (see \cite[Section 6]{KPRR:CK}). In particular, by \cite[Theorem 3.7]{KuPaRae98} and \cite[Proposition 2.1]{KuPaRae98}, $C^*(H)$ is a hereditary $C^*$-subalgebra of $C^*(\cG_E)$. Since $H$ has no loops it is AF, and so $C^*(\cG_E)$ cannot be purely infinite. Thus $E$ has no paradoxical decomposition in every vertex. Therefore Condition (I) must hold. \end{proof}

On the other hand, Condition (I) is not sufficient. Let $E$ be the following graph:
\begin{center}
\begin{tikzpicture}
\node (p1) at (0,-1.5) {$v$};
\node (p2) at (0,0) {$\bullet$};
\node (p3) at (2,-1.5) {$\bullet$};
\node (p4) at (2,0) {$\bullet$};
\node (p5) at (4,-1.5) {$\bullet$};
\node (p6) at (4,0) {$\bullet$};
\node (p7) at (6,-1.5) {$. $};

\draw[->] (p1) -- (p2) node [midway, left] (TextNode) { $\scriptstyle f_1$};
\draw[->] (p2) to [out=135,in=225,looseness=7] (p2) ;
\draw[->] (p2) to [out=45,in=315,looseness=7] (p2)  ;

\draw[->] (p3) -- (p4) node [midway, left] (TextNode) { $\scriptstyle f_2$};
\draw[->] (p4) to [out=135,in=225,looseness=7] (p4) ;
\draw[->] (p4) to [out=45,in=315,looseness=7] (p4)  ;

\draw[->] (p5) -- (p6) node [midway, left] (TextNode) { $\scriptstyle f_3$};
\draw[->] (p6) to [out=135,in=225,looseness=7] (p6) ;
\draw[->] (p6) to [out=45,in=315,looseness=7] (p6)  ;

\draw[->] (p1) -- (p3) node [midway, below, sloped] (TextNode) { $\scriptstyle e_1$};
\draw[  ->] (p3) -- (p5) node [midway, below, sloped] (TextNode) { $\scriptstyle e_2$};
\draw[ densely dotted, ->] (p5) -- (p7) ;
\end{tikzpicture}
\end{center}
Both Conditions (I) and (K) hold for this graph. Suppose now that $Z(v)$ has a paradoxical decomposition, where $v$ is the vertex on the bottom left (as denoted). Then \[\bar{e}= e_1e_2\cdots \in Z(\beta_i)\cap Z(\delta_j)\] for some $i$ and $j$, but then we must have $\beta_i = e_1\cdots e_n$ and $ \delta_j = e_1 \cdots e_m$ for some $n,m \in \N$. Thus $\alpha_i=e_1 \cdots e_n$ and $\beta_j =e_1 \cdots e_m$ as well, so $Z(\alpha_i)\cap Z(\gamma_j)\neq \emptyset$. Hence $Z(v)$ has no paradoxical decomposition, so (I) is not a sufficient condition for paradoxicality.

Condition (DI) is a strengthening of Condition (I). In the example above, we constructed a graph with a path containing only vertices in $V^0$ to show that (I) was not sufficient to paradoxical decompositions of the cylinder sets.  Thus, one might suppose that a suitable strengthening of (I) might be given by the property that  \emph{every} path must pass through $V^2$. However, all cylinder sets can be paradoxical in absence of this condition, as we see in the following example. Consider the graph $E$ pictured below.
\begin{center}
\begin{tikzpicture}
\node (p1) at (0,-1) {$v$};
\node (p2) at (1,0) {$w$};
\node (p3) at (2,-1) {$\bullet$};
\node (p4) at (3,0) {$\bullet$};
\node (p5) at (4,-1) {$\bullet$};
\node (p6) at (5,0) {};
\node (p7) at (6,-1) {};

\draw[->] (p1) -- (p2) node [midway, left] (TextNode) { $\scriptstyle \beta_1$};
\draw[->] (p2) to [out=110,in=200,looseness=7] (p2)  node [above=16pt  ] (TextNode) { $\scriptstyle f$};
\draw[->] (p2) to [out=70,in=340,looseness=7] (p2)   node [above=16pt  ] (TextNode) { $\scriptstyle e$};

\draw[->] (p2) -- (p3) node [midway, right] (TextNode) { $\scriptstyle \beta_2$};

\draw[->] (p3) -- (p4) ;
\draw[->] (p4) to [out=110,in=200,looseness=7] (p4) ;
\draw[->] (p4) to [out=70,in=340,looseness=7] (p4)  ;
\draw[->] (p4) to (p5);

\draw[densely dotted, ->] (p5) -- (p6);

\draw[->] (p1) -- (p3) node [midway, below, sloped] (TextNode) { $\scriptstyle \alpha_1$};
\draw[  ->] (p3) -- (p5) ;
\draw[ densely dotted, ->] (p5) -- (p7) ;
\end{tikzpicture}
\end{center}
Observe that $E$ contains paths that do not pass through $V^2$. However $E$ satisfies (DL). Indeed, for every vertex $u \in E^0$, the set $\left\lbrace x\in E^{\leq \infty} \mid s(x)=u \right\rbrace$ is either isomorphic to $\left\lbrace x\in E^{\leq \infty} \mid s(x)=v \right\rbrace$ or to $\left\lbrace x\in E^{\leq \infty} \mid s(x)=w \right\rbrace$. Thus we need only show that there are paradoxical decompositions for $Z(v)$ and $Z(w)$, and these are easily seen to exist: For $Z(v)$ take
\[ Z( \beta_1 e ,\beta_1), Z(\alpha_1, \alpha_1) \quad \text{ and }\quad Z( \beta_1 f ,\beta_1), Z(\beta_1\beta_2, \alpha_1);\]
and for $Z(w)$ take
\[ Z( ee, e ), Z( ff ,f), Z(\beta_2, \beta_2) \quad \text{ and } \quad Z( ef, e ), Z( fe ,f), Z(e\beta_2, \beta_2).\]
  Note however, $E$ does satisfy (DI), and indeed, as the next proposition shows, this is precisely the condition we are after. 

\begin{prop} \label{prop:IIPara}
Let $E$ be a row-finite directed graph without sinks that satisfies (K). Then $E$ satisfies Condition (DI) if and only if $Z(v)$ is paradoxical for all $v\in E^0$.
\end{prop}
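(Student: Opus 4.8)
The plan is to prove the two implications separately, working directly with the compact open bisections $Z(\alpha,\beta)$ and exploiting that a vertex $w\in V^2$ carries two distinct first-return cycles. I will use two elementary observations throughout. First, if $\mu\neq\nu$ are distinct first-return cycles at $w\in V^2$, then $\mu,\nu$ are incomparable (neither is a prefix of the other), and more generally $\{\nu^k\mu : k\ge 0\}$ is an infinite antichain of cycles at $w$; thus $w$ admits arbitrarily many pairwise cylinder-disjoint cycles. Second, a finite union of compact open bisections with pairwise disjoint domains and pairwise disjoint ranges is again a compact open bisection, and the associated range/domain characteristic functions give Murray--von Neumann equivalences of the corresponding projections.

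For the forward implication, fix $v$ and take a (DI)-decomposition $Z(v)=\bigsqcup_{i=1}^n Z(\beta_i)$ with paths $\alpha_i\colon v\to t(\beta_i)$ through $V^2$. For each $i$ let $\theta_i$ be the shortest prefix of $\alpha_i$ whose terminal vertex $w_i:=t(\theta_i)$ lies in $V^2$, and write $\alpha_i=\theta_i\rho_i$. The main obstacle is to make the ranges of the two families pairwise disjoint across the different pieces, and this is exactly where the first-passage choice pays off: if $\theta_i$ were a proper prefix of $\theta_{i'}$ then $\alpha_{i'}$ would meet $V^2$ strictly before $\theta_{i'}$, contradicting minimality, so the $\theta_i$ are pairwise either equal or incomparable. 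Grouping the pieces by their common prefix $\theta$ and using the antichain of cycles at $w=t(\theta)$, I would assign to each piece $i$ in the group two cycles $c_i^1,c_i^2$ at $w$ so that the finite set $\{\theta c_i^a : i,a\}$ is an antichain. Then $S_i^a:=Z(\theta_i\,c_i^a\,\rho_i,\ \beta_i)$ is a legitimate bisection (since $c_i^a$ is a cycle at $s(\rho_i)=w_i$, we have $t(\theta_i c_i^a\rho_i)=t(\alpha_i)=t(\beta_i)$), with domain $Z(\beta_i)$ and range $Z(\theta_i c_i^a\rho_i)\subseteq Z(\theta_i c_i^a)$. The ranges are pairwise disjoint --- within a group because the $\theta c_i^a$ form an antichain, and across groups because distinct groups carry incomparable $\theta$'s --- while both families $\{S_i^1\}_i$ and $\{S_i^2\}_i$ have domains covering $Z(v)$. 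Hence $\{S_i^1\}\cup\{S_i^2\}$ exhibits $Z(v)$ as paradoxical.

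For the reverse implication I would argue the contrapositive, and here Condition (K) is essential. Set $W=\{u : \text{some path } v\to u \text{ meets } V^2\}$; this set is forward-closed, and a compactness argument shows (DI) holds at $v$ if and only if every infinite path from $v$ eventually enters $W$. If (DI) fails, there is an infinite path $x$ from $v$ all of whose vertices lie in $D:=E^0\setminus W$. Since $W$ is forward-closed, $D$ is closed under predecessors, so $Y:=\{\text{infinite paths with all vertices in }D\}$ is a closed $\cG_E$-invariant subset of the unit space and $\cG_E|_Y$ is precisely the path groupoid of the subgraph $E_D$ on $D$. Now (K) enters: a reachable vertex of $D$ cannot lie in $V^2$ (else it would lie in $W$) and, as $V^1=\emptyset$, it is not in $V^1$ either, so $E_D$ has no cycles and $C^*(E_D)$ is AF, in particular stably finite.

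Finally, suppose for contradiction that $Z(v)$ is paradoxical. Intersecting each bisection of the decomposition with $\cG_E|_Y$ --- and using invariance of $Y$ so that ranges of $W$-avoiding elements remain in $Y$ --- I obtain two families of compact open bisections of $\cG_{E_D}$ whose domains cover $Z_{E_D}(v)=Y\cap Z(v)$ and whose ranges are pairwise disjoint subsets of it. Refining domains to partitions and assembling each family via the second observation yields $A,B\subseteq Z_{E_D}(v)$ with $A\sqcup B\subseteq Z_{E_D}(v)$ and $p_v\sim 1_A$, $p_v\sim 1_B$, where $p_v$ is the vertex projection of $C^*(E_D)$; since $1_A+1_B\le p_v$ with $1_B\neq 0$, the projection $p_v$ is infinite in the AF algebra $C^*(E_D)$, contradicting stable finiteness. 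Therefore $Z(v)$ is not paradoxical, which proves the contrapositive. The two genuinely delicate points are the cross-piece disjointness in the forward direction, dealt with by the first-passage/antichain bookkeeping, and the identification of the loop-free invariant set $Y$ in the reverse direction.
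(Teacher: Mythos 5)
Your forward implication is correct and is essentially the paper's own argument carried out in more detail: the paper likewise takes a (DI)-decomposition $Z(v)=\bigsqcup_i Z(\beta_i)$ and uses the fact that a vertex of $V^2$ admits arbitrarily many pairwise cylinder-disjoint return cycles to produce two families $Z(\alpha_i^1,\beta_i)$, $Z(\alpha_i^2,\beta_i)$ with pairwise disjoint ranges; your first-passage/antichain bookkeeping makes explicit the cross-piece disjointness that the paper leaves implicit. The reverse implication is where you genuinely diverge: the paper runs the same K\"onig-type compactness argument to get an infinite path from $v$ avoiding $W$, but then packages its vertices into a maximal tail $M\ni v$ in which $v$ connects to no loop, concludes that $C^*(E)\cong C^*(\cG_E)$ is not purely infinite by Theorem~\ref{thm:HoSz}, and finally invokes \cite[Corollary 4.12]{paradoxical} (via essential principality, which follows from (K)) to conclude that not every cylinder set can be paradoxical. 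Your attempt to replace this by a self-contained finiteness argument is appealing, but as written it has a genuine gap.

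The gap is the claim that ``$E_D$ has no cycles,'' hence that $C^*(E_D)$ is AF and stably finite. With your definition $W=\{u : \text{some path } v\to u \text{ meets } V^2\}$, the complement $D=E^0\setminus W$ contains \emph{every} vertex not reachable from $v$ (vacuously, since there are no paths $v\to u$ at all), and such vertices may well lie on cycles. Concretely, let $E$ be an infinite tail $v\to u_1\to u_2\to\cdots$ together with one extra vertex $z$, not reachable from $v$, carrying two loops: then $E$ is row-finite, sink-free and satisfies (K), Condition (DI) fails at $v$, but $W=\emptyset$, so $E_D=E$, and $C^*(E_D)=C^*(E)$ contains infinite projections (e.g.\ the vertex projection at $z$) and is certainly not AF --- so your final contradiction evaporates. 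Your own parenthetical justification only shows that \emph{reachable} vertices of $D$ avoid $V^1\cup V^2$, and the jump from there to acyclicity of all of $E_D$ is exactly what fails. The argument can be repaired by localising at the end instead of the beginning: every bisection in a paradoxical decomposition of $Z(v)$ involves only paths starting at $v$, so all your projections and partial isometries lie in the corner $1_{Z(v)\cap Y}\,C^*(\cG_E|_Y)\,1_{Z(v)\cap Y}$, which is canonically a corner of $C^*(\cG_F)\cong C^*(F)$, where $F$ is the subgraph of all vertices and edges lying on paths in $Z(v)\cap Y$. This $F$ is row-finite, sink-free, and --- now correctly, by (K) together with reachability from $v$ --- has no cycles, so $C^*(F)$ is AF by \cite{KuPaRae98} and the corner is stably finite; your infiniteness of $1_{Z(v)\cap Y}$ then gives the desired contradiction. (Note that simply shrinking $D$ to its reachable part is not an alternative fix as stated, since $Y$ then loses the invariance you invoke when restricting the bisections; one would instead have to verify the restriction property directly, using that all $\alpha_i,\beta_i$ start at $v$. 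A smaller inaccuracy of the same kind: $E_D$ may have sinks, so $\cG_E|_Y$ is the groupoid of the infinite path space of $E_D$, not literally the path groupoid of $E_D$ as the paper defines it.)
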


\begin{proof}
Suppose $E$ satisfies Condition (DI). There are decompositions $Z(\alpha_i^1, \beta_i)$ and $Z(\alpha_i^2, \beta_i)$ such that $Z(\alpha_i^k)\cap Z(\alpha_j^l) = \emptyset$ for all $i,j \leq n$ and $k,l =1,2$. Since all $\alpha_i$ pass through $V^2$ and for $w\in V^2$ we know that for all $m\in \N$ there exists $\mu_1, \dots, \mu_m \in E^*$ with $s(\mu_i)=t(\mu_i)=w$ and $Z(\mu_i)\cap Z(\mu_j) =\emptyset$ for all $i\neq j$.

For the other direction, suppose $E$ does not satisfy (DI). Then for all $n\in \N$ the set $P_n$ defined by
 \[ \left\lbrace \gamma \in E^n : s(\gamma)=v \text{ and } \nexists \alpha \text{ that passes through } V^2  \text{ with } s(\alpha)=v, t(\alpha)=t(\gamma)\right\rbrace\] is non-empty and for all $\mu \in P_{n+1}$, we have $\mu_{1}\dots\mu_n \in P_n$. Thus by Zorn's Lemma, there exists an infinite path $x\in E^\infty$ such that there is no path from $v$ via a vertex in $V^2$ to $t(x_n)$ for some $n\in \N$. However, that means that \[M = \left\lbrace u \in E^0 : u\geq t(x_n) \text{ for some } n\in \N \right\rbrace \] is a maximal tail, where $v$ does not connect to a loop in $M$. 
Therefore the algebra \mbox{$C^*(E) \cong C^*(\cG_E)$} is not purely infinite, and hence by \cite[Corollary 4.12]{paradoxical} the cylinder set $Z(v)$ is not paradoxical for all $v\in E^0$.
\end{proof}

The next proposition will allow us to examine the role of Condition (DL).

\begin{prop}
Let $E$ be a row-finite directed graph without sinks, and let $v\in E^0$. The following are equivalent.
\begin{enumerate}
\item There exists a decomposition $Z(v) = \sqcup_{i=1}^n Z(\beta_i)$ such that for every $i = 1, \dots, n $ there exists a path $\alpha_i \in E^*$ with $t(\alpha_i) = t(\beta_i)$, $s(\alpha_i) = v$, and $\alpha_i$ passes through a loop.
\item If $M$ is a maximal tail containing $v$, then there is a loop in $M$ that $v$ is connected to.
\end{enumerate}
\end{prop}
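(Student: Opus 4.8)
The plan is to prove the two implications separately, following the maximal-tail techniques already used for Condition (DI) in Proposition~\ref{prop:IIPara}. The implication from the local decomposition to the maximal-tail statement is a direct construction, while the converse is most cleanly obtained by contraposition, producing a ``bad'' infinite path by a König/Zorn argument exactly as in the proof of Proposition~\ref{prop:IIPara}.

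For $(1) \Rightarrow (2)$, let $M$ be a maximal tail containing $v$. Using property (2) of a maximal tail (Definition~\ref{maximal tail}) together with the absence of sinks, I would first build an infinite path $x \in E^\infty$ with $s(x) = v$ all of whose vertices lie in $M$: starting from $v \in M$, at each stage property (2) supplies an edge leaving the current vertex whose target is again in $M$. Since $x \in Z(v) = \sqcup_{i=1}^n Z(\beta_i)$, the path $x$ lies in exactly one $Z(\beta_i)$, so $\beta_i$ is an initial segment of $x$ and hence $t(\beta_i)$ is a vertex of $x$, giving $t(\beta_i) \in M$. By hypothesis there is a path $\alpha_i$ from $v$ to $t(\beta_i)$ passing through a loop, say through a vertex $w$ lying on a cycle $c$. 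Then $v \geq w \geq t(\beta_i)$; since $t(\beta_i) \in M$ and $w \geq t(\beta_i)$, property (1) of a maximal tail gives $w \in M$, and as every vertex of $c$ dominates $w$, the whole cycle $c$ lies in $M$. Thus $c$ is a loop in $M$ to which $v$ connects, proving $(2)$.

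For $(2) \Rightarrow (1)$, I would argue the contrapositive. If $(1)$ fails, then for every $n$ the set
\[ P_n = \{ \gamma \in E^n \mid s(\gamma) = v \text{ and no path from } v \text{ to } t(\gamma) \text{ passes through a loop} \} \]
is non-empty; indeed, $P_n = \emptyset$ would make the decomposition of $Z(v)$ into all length-$n$ paths from $v$ witness $(1)$. Moreover, if $\mu \in P_{n+1}$ then its truncation $\mu_1 \cdots \mu_n \in P_n$, since a loop-traversing path from $v$ to $t(\mu_1\cdots\mu_n)$ could be extended by $\mu_{n+1}$. As $E$ is row-finite each $P_n$ is finite, so by the König/Zorn argument of Proposition~\ref{prop:IIPara} there is an infinite path $x \in E^\infty$ with $s(x) = v$ and $x_1 \cdots x_n \in P_n$ for all $n$. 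Setting
\[ M = \{ u \in E^0 \mid u \geq t(x_n) \text{ for some } n \}, \]
I would verify, exactly as in Proposition~\ref{prop:IIPara}, that $M$ is a maximal tail containing $v$. Finally, if $v$ connected to a loop $c$ in $M$ through a vertex $w$ on $c$, then $w \in M$ gives $w \geq t(x_n)$ for some $n$, and concatenating a path $v \to w$, a traversal of $c$, and a path $w \to t(x_n)$ produces a path from $v$ to $t(x_n)$ passing through a loop, contradicting $x_1 \cdots x_n \in P_n$. Hence no loop in $M$ is connected to $v$, so $(2)$ fails.

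The routine parts are the verification of the three maximal-tail axioms for $M$ (identical to Proposition~\ref{prop:IIPara}) and the König/Zorn extraction of $x$. The step requiring the most care is keeping the two notions straight: ``$\alpha_i$ passes through a loop'' (a vertex of the \emph{path} lies on a cycle) versus ``$v$ connects to a loop in $M$'' (there is a cycle, all of whose vertices lie in $M$, reachable from $v$). The bridge between them in both directions is the closure property (1) of maximal tails, which promotes the base vertex $w$ of the cycle, and hence the entire cycle, into $M$; getting this bookkeeping right is the only genuinely delicate point.
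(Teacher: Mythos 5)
Your proof is correct and follows essentially the same route as the paper: the forward direction builds an infinite path inside the maximal tail $M$ via property (2) of Definition~\ref{maximal tail}, locates it in some $Z(\beta_i)$, and uses the closure property (1) to pull the loop on $\alpha_i$ into $M$; the converse is the same contrapositive K\"onig/Zorn extraction of a ``bad'' infinite path and the same maximal tail $M = \{u : u \geq t(x_n) \text{ for some } n\}$ used in Proposition~\ref{prop:IIPara}. Your write-up is in fact somewhat more careful than the paper's (e.g.\ justifying the truncation property of the sets $P_n$ and the promotion of the whole cycle into $M$), but the underlying argument is identical.
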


\begin{proof}
Suppose (1) holds and let $M$ be a maximal tail with $v\in M$. Then by Definition \ref{maximal tail} (2), there exists an infinite path $x\in E^\infty$ with $s(x)=v$ and $t(x_n)\in M$ for all $n\in \N$.
We know that \[Z(v) = \bigsqcup_{i=1}^n Z(\beta_i),\] so $x \in Z(\beta_i)$ for some $1\leq i\leq n$.
Hence $t(\beta_i)=t(\alpha_i)\in M$, and then by Definition \ref{maximal tail} (1), every vertex on $\alpha_i$ is in $M$ and $\alpha_i$ passes through a loop. So that loop is in $M$ as well, and lastly $v$ connects to that loop. Thus (1) implies (2). 

For the converse, (1) does not hold, that is, there is no decomposition $Z(v) = \sqcup_{i=1}^n Z(\beta_i)$ satisfying the requirements of Definition~\ref{ConditionDefs} (3). Using Zorn's lemma, once again, there exists an infinite path $x\in E^\infty$ with $s(x)=v$, such that there is no path from $v$ via a loop to $t(x_n)$ for some $n\in \N$. Then \[M = \left\lbrace u \in E^0 : u\geq t(x_n) \text{ for some } n\in \N \right\rbrace \] is again a maximal tail, where $v$ does not connect to a loop. Thus (2) implies (1).
\end{proof}

\begin{corollary} \label{cor:KIII-pi}
Let $E$ be a row-finite directed graph without sinks. The graph $E$ satisfies Conditions (K) and (DL) if and only if $C^*(\G_E)$ is purely infinite.
\end{corollary}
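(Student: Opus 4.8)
The plan is to chain together three ingredients already established in the excerpt: the isomorphism $C^*(\G_E) \cong C^*(E)$, the Hong--Szymanski characterisation of pure infiniteness, and the preceding proposition, which has already done the combinatorial work of translating the relevant loop condition into the language of decompositions of cylinder sets.

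First I would invoke Theorem~\ref{isomorphic algebras} to identify $C^*(\G_E)$ with $C^*(E)$, so that pure infiniteness of $C^*(\G_E)$ is literally the same property as pure infiniteness of $C^*(E)$. Then Theorem~\ref{thm:HoSz}, in particular the equivalence of its conditions (1) and (4), tells me that $C^*(E)$ is purely infinite if and only if $E$ satisfies Condition (K) and every vertex in every maximal tail $M$ connects to a loop in $M$. Since Condition (K) already appears explicitly in the statement to be proved, the entire content of the corollary reduces to showing that Condition (DL) is equivalent to the maximal-tail condition ``every vertex in every maximal tail $M$ connects to a loop in $M$''.

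This final equivalence is exactly what the preceding proposition supplies, once it is read over all vertices. By Definition~\ref{ConditionDefs}(3), Condition (DL) asserts precisely that statement (1) of that proposition holds for every $v \in E^0$; and for each fixed $v$, the proposition shows statement (1) is equivalent to statement (2), namely that every maximal tail $M$ containing $v$ contains a loop to which $v$ connects. Quantifying over all $v \in E^0$ yields
\[
\text{(DL)} \iff \big(\forall v \in E^0,\ \forall \text{ maximal tails } M \ni v:\ v \text{ connects to a loop in } M\big).
\]
Because the two universal quantifiers commute, the right-hand side is identical to ``for every maximal tail $M$ and every vertex $v \in M$, $v$ connects to a loop in $M$'', which is precisely the Hong--Szymanski condition. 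Combining this with the previous paragraph gives the asserted equivalence.

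The argument is in essence a bookkeeping exercise, and I expect no serious obstacle. The only point requiring care is matching the exact phrasing of Theorem~\ref{thm:HoSz}(4) with Condition (DL): one must observe that the per-vertex equivalence of the preceding proposition, when quantified universally over $v$, reproduces the ``maximal tail'' formulation exactly, via the harmless commutation of the quantifiers over $v$ and over $M$. I would also remark that Condition (K) is carried along unchanged on both sides of every equivalence and therefore plays no role in this last step beyond appearing in the hypotheses that feed Theorem~\ref{thm:HoSz}; the equivalence of (DL) with the maximal-tail condition itself holds for any row-finite graph without sinks.
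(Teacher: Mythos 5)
Your proof is correct and follows essentially the same route as the paper's: the paper likewise deduces the corollary by combining the preceding proposition (identifying Condition (DL) with the maximal-tail loop condition) with the equivalence of (1) and (4) in Theorem~\ref{thm:HoSz}, with the identification $C^*(\G_E)\cong C^*(E)$ implicit. Your added care about quantifier bookkeeping is fine but not a departure from the paper's argument.
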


\begin{proof} By the previous proposition $(DL)$ is equivalent to the property that every vertex in every maximal tail of $M$ connects to a loop in $M$.  Thus the result follows from the equivalence of (1) and (4) of Theorem~\ref{thm:HoSz}, 
\end{proof}

As we observed earlier, Condition (DI) implies Condition (DL), but the reverse does not necessarily hold. However, in the presence of Condition (K), they are equivalent.

\begin{corollary}\label{cor:II iff III}
 Let $E$ be a row-finite directed graph without sinks that satisfies Condition (K). Then $E$ satisfies Condition (DI) if and only if $E$ satisfies (DL).
\end{corollary}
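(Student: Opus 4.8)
The plan is to reduce Condition (DI) and Condition (DL) to a single combinatorial statement about individual paths, and then observe that Condition (K) forces the two to coincide. Comparing Definition~\ref{ConditionDefs}(2) and (3), the only difference between (DI) and (DL) is the final clause: in (DI) the path $\alpha_i$ must \emph{pass through $V^2$}, while in (DL) it must \emph{pass through a loop}; the decomposition $Z(v)=\sqcup_i Z(\beta_i)$ and the conditions $t(\alpha_i)=t(\beta_i)$, $s(\alpha_i)=v$ are identical in both. Thus it suffices to prove that, for a graph satisfying (K), ``$\alpha$ passes through $V^2$'' and ``$\alpha$ passes through a loop'' are equivalent properties of a finite path $\alpha$, after which the two decomposition conditions become verbatim the same and the data $(\beta_i,\alpha_i)$ of any witnessing decomposition transfers unchanged.

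The forward implication needs no hypothesis: if a vertex $w$ lies in $V^2$ then by definition there is a cycle based at $w$, so $w$ lies on a loop; hence any $\alpha$ passing through $V^2$ passes through a loop, and (DI) implies (DL). This recovers the remark following Definition~\ref{ConditionDefs}. For the converse I would use Condition (K) in the form $E^0 = V^0 \sqcup V^2$. A vertex $w$ lying on a loop has a cycle based at it (cyclically rotate the loop to start at $w$), so $w \notin V^0$; under (K) this forces $w \in V^2$. Therefore the set of vertices lying on some loop is exactly $V^1 \cup V^2 = V^2$, so ``passes through a loop'' and ``passes through $V^2$'' describe the same paths, and any decomposition witnessing (DL) also witnesses (DI).

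There is essentially no analytic obstacle here; the entire content is the set equality $\{w : w \text{ lies on a loop}\} = V^2$ under (K), and the only point requiring care is to confirm the intended reading of ``passes through a loop'' as \emph{visits a vertex lying on a loop} rather than \emph{contains a cycle as a subpath}. The former reading is the correct one, since it is exactly what makes the unconditional implication (DI)$\Rightarrow$(DL) valid, as already asserted in the paper.

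As a consistency check—and an alternative route that leans on the machinery already assembled—one can instead chain the established results. Since (K) makes $\G_E$ essentially principal, Proposition~\ref{prop:IIPara} gives, under (K), that (DI) is equivalent to paradoxicality of $Z(v)$ for all $v\in E^0$, while Corollary~\ref{cor:KIII-pi} gives that (K) together with (DL) is equivalent to pure infiniteness of $C^*(\G_E)$. Combining these with Proposition~\ref{prop:para_vetex}, paradoxicality of all $Z(v)$ implies pure infiniteness, and the converse direction of the argument in Proposition~\ref{prop:IIPara} shows pure infiniteness implies (DI); hence under (K) both (DI) and (DL) are equivalent to pure infiniteness of $C^*(\G_E)$, and therefore to each other. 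I would present the direct combinatorial argument as the main proof and relegate this second route to a remark.
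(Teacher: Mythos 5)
Your main argument is correct and is essentially the paper's proof: the paper likewise disposes of (DL)$\Rightarrow$(DI) in one line by noting that Condition (K) guarantees a second loop (hence membership in $V^2$) at any vertex lying on a loop, the converse implication being unconditional. Your careful unpacking of the reading of ``passes through a loop'' and the alternative $C^*$-algebraic route are fine but add nothing beyond the paper's one-sentence argument.
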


\begin{proof}
We only need to show that Condition (DL) implies (DI), but this is immediate since (K) implies that at any vertex with a loop, there is a second loop.
\end{proof}

Finally, as a summary of the above, we come to the main theorem.

\begin{theorem}
\label{thm:main}
Let $E$ be a row-finite directed graph without sinks. The following are equivalent:
\begin{enumerate}
\item $C^*(\cG_E)$ is purely infinite;
\item $\cG_E$ is essentially principal and for every finite path $\alpha$, the cylinder set $Z(\alpha)$ is paradoxical;
\item $E$ satisfies Conditions (K) and (DI);
\item $E$ satisfies Conditions (K) and (DL).
\setcounter{ContList}{\value{enumi}}
\end{enumerate}
\end{theorem}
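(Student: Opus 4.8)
The plan is to show that the bulk of these equivalences has already been established, so that proving the main theorem reduces to inserting condition (2) into a chain connecting (1), (3), and (4). First I would note that Corollary~\ref{cor:KIII-pi} gives the equivalence of (1) and (4) directly, while Corollary~\ref{cor:II iff III} gives the equivalence of (3) and (4) (both of which include Condition (K)). Consequently (1), (3), and (4) are already mutually equivalent, and it remains only to locate (2) within this chain. I would do this by proving the two implications (3) $\Rightarrow$ (2) and (2) $\Rightarrow$ (1), which together with the corollaries close the cycle (2) $\Rightarrow$ (1) $\iff$ (4) $\iff$ (3) $\Rightarrow$ (2).

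For (3) $\Rightarrow$ (2), I would assume $E$ satisfies Conditions (K) and (DI). Condition (K) guarantees that $\cG_E$ is essentially principal by \cite[Proposition 6.3]{KPRR:CK}, which is the first half of (2). For the second half, Proposition~\ref{prop:IIPara} shows that, in the presence of (K), Condition (DI) is equivalent to paradoxicality of $Z(v)$ for every vertex $v \in E^0$. To upgrade this from vertices to arbitrary finite paths I would invoke the observation recorded after Definition~\ref{def:paradoxical}, namely that $Z(\alpha)$ is paradoxical if and only if $Z(r(\alpha))$ is paradoxical: since $r(\alpha) \in E^0$ and every vertex cylinder set is paradoxical, it follows that $Z(\alpha)$ is paradoxical for every $\alpha \in E^*$. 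This gives (2).

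For (2) $\Rightarrow$ (1), I would use that vertices are paths of length zero in $E^*$, so hypothesis (2) in particular yields that $Z(v)$ is paradoxical for every $v \in E^0$, together with essential principality of $\cG_E$. These are precisely the hypotheses of Proposition~\ref{prop:para_vetex}, which then gives that $C^*(\cG_E)$ is purely infinite, that is, (1). Combining this with the two corollaries completes the equivalence of all four statements.

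I do not anticipate a serious obstacle here: all of the analytic and combinatorial work has been carried out in the preceding propositions and corollaries---in particular the Zorn's lemma and maximal-tail argument inside Proposition~\ref{prop:IIPara} and the inner-exactness input to Proposition~\ref{prop:para_vetex}---so the theorem is essentially an assembly of these results. The only points demanding a little care are the reduction of paradoxicality of a general cylinder set $Z(\alpha)$ to that of the vertex cylinder set $Z(r(\alpha))$, and the bookkeeping of where essential principality enters: it is an explicit hypothesis of (2) but must be \emph{derived} from Condition (K) via \cite[Proposition 6.3]{KPRR:CK} when proving (3) $\Rightarrow$ (2).
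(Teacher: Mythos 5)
Your proposal is correct and follows essentially the same route as the paper: (2) $\Rightarrow$ (1) via Proposition~\ref{prop:para_vetex}, (3) $\Leftrightarrow$ (4) via Corollary~\ref{cor:II iff III}, (1) $\Leftrightarrow$ (4) via Corollary~\ref{cor:KIII-pi}, and (3) $\Rightarrow$ (2) by combining Condition (K) (essential principality) with Proposition~\ref{prop:IIPara}. If anything, you are slightly more careful than the paper in making explicit the passage from paradoxicality of vertex cylinder sets to that of arbitrary $Z(\alpha)$ via the remark following Definition~\ref{def:paradoxical}.
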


\begin{proof}
The implication (2) implies (1) is given by Proposition~\ref{prop:para_vetex}. The equivalence of (3) and (4) is Corollary~\ref{cor:II iff III}.  If (3) holds, then Condition (K) implies that $\mathcal{G}_E$ is essentially principal and since Condition (DI) also holds, the cylinder set of any finite path is paradoxical by Proposition~\ref{prop:IIPara}. Thus (3) implies (2). Finally, the equivalence of (4) and (1) follows directly from Corollary \ref{cor:KIII-pi}.
\end{proof}

Observe that if $C^*(\cG_E)$ satisfies any of the equivalent conditions of Theorem~\ref{thm:main}, then it automatically has real rank zero by Theorem~\ref{thm:HoSz}. Thus each of (1)--(4) is also equivalent to the following properties:
\begin{enumerate}
\setcounter{enumi}{\value{ContList}}
\item $C^*(\cG_E) \otimes \mathcal{O}_{\infty} \cong C^*(\cG_E)$;
\item $C^*(\cG_E)$ is strongly purely infinite (see \cite[Definition 5.1]{MR1906257});
\item  every nonzero hereditary $C^*$-subalgebra in any quotient of $C^*(\cG_E)$ contains an infinite projection \cite[Proposition 2.11]{Pasnicu2007}.
 \end{enumerate}

Unlike the Hong--Szymanski result of Theorem~\ref{thm:HoSz}, we require the assumption that $E$ has no sinks. This comes from the fact that the path groupoid is only constructed for a row-finite graph without sinks. However, one can deal with graphs with sinks by adding tails, a technique introduced in  \cite{BaPaReSz00}. Given a graph $E$  with sinks, we denote by $F$ the graph obtained by adding a tail to every sink. Then $C^*(\mathcal{G}_E) \cong C^*(E)$ is isomorphic to a full corner in $C^*(F)$  \cite{BaPaReSz00}. In particular, $C^*(E)$ is purely infinite if and only if $C^*(F)$ is purely infinite.

Finally, it also worth noting that the equivalence of (1) and (2) in the main theorem is a stronger result than what one has for an arbitrary ample amenable \'etale groupoid $\cG$. In that case, if $\cG$ is essentially principal and has a basis for the topology consisting of  $(\G^a, 2, 1)$-paradoxical sets, then $C^*(\cG)$ is (strongly) purely infinite by  \cite[Corollary 4.12]{paradoxical}. However, unlike for the path groupoid, the converse is not known.

\end{document}